\theoremstyle{plain}
\newtheorem{prop}{Proposition}[section]
\newtheorem{lem}[prop]{Lemma}
\newtheorem{defi}[prop]{Definition}
\newtheorem{rmk}[prop]{Remark}
\newtheorem{con}[prop]{Convention}
\newcommand {\R} {\mathbb{R}} 
 \newcommand {\N} {\mathbb{N}}
\newcommand {\p} {\partial}
\newcommand {\D} {\Delta}
\newcommand {\sgn} {\text{sgn}}
\newcommand {\supp} {\text{supp}}
\DeclareMathOperator{\G}{\mathcal{G}}
\DeclareMathOperator{\Je}{\mathcal{J}_{\epsilon}}
\DeclareMathOperator{\Jed}{\mathcal{J}_{\epsilon,\delta}}
\DeclareMathOperator {\dist} {dist}
\DeclareMathOperator {\sign} {sgn}
\DeclareMathOperator{\F} {\mathcal{F}}
\author{Angkana R\"uland}
\address{
Mathematical Institute of the University of Oxford, Andrew Wiles Building, Radcliffe Observatory Quarter, Woodstock Road, OX2 6GG Oxford, United Kingdom }
\email{ruland@maths.ox.ac.uk}
\begin{document}

\title[Quantitative Estimates for Truncated Riesz Transforms]{Quantitative Invertibility and Approximation for the Truncated Hilbert and Riesz Transforms}

\begin{abstract}
In this article we derive quantitative uniqueness and approximation properties for (perturbations) of Riesz transforms. Seeking to provide robust arguments, we adopt a PDE point of view and realize our operators as harmonic extensions, which makes the problem accessible to PDE tools. In this context we then invoke quantitative propagation of smallness estimates in combination with qualitative Runge approximation results. These results can be viewed as quantifications of the approximation properties which have recently gained prominence in the context of nonlocal operators, c.f. \cite{DSV14}, \cite{DSV16}.
\end{abstract}

\subjclass[2010]{Primary 35Q93}

\keywords{truncated Hilbert transform, perturbations of truncated Riesz transforms, stability, approximation, cost of approximation}

\thanks{
The author gratefully acknowledges a Junior Research Fellowship at Christ Church. She would like to thank Mikko Salo for helpful discussions related to this project.}

\maketitle

\section{Introduction}
In this note we derive \emph{quantitative unique continuation} and \emph{approximation} results for the truncated Hilbert transform, truncated Riesz transforms and for certain classes of perturbations of these. 
These operators should be regarded as some of the simplest possible model problems, which display typical features of elliptic nonlocal operators of fractional Laplacian type (c.f. \cite{DSV14}, \cite{DSV16}, \cite{Rue15}) in that:
\begin{itemize}
\item They satisfy very strong uniqueness properties (c.f. Lemma \ref{lem:Hilbert} (a)).
\item They allow for very good approximation properties (c.f. Lemma \ref{lem:Hilbert} (b)).
\end{itemize}
As pointed out in \cite{GSU16} these two properties are dual with respect to each other (c.f. also \cite{L56}, \cite{B62}, \cite{B62a} for similar Runge type approximation and duality results in the context of local equations). The main objective of this note is to provide \emph{quantitative} versions of both of these properties by means of robust PDE tools in the situation of the described model problems. We expect that this point of view can be generalized to a much broader class of nonlocal problems. \\

Let us begin by discussing the one-dimensional situation: Here we study (modifications of) the Hilbert transform
\begin{align*}
Hf(x) := p.v. \int\limits_{\R} \frac{f(t)}{x-t}dt.
\end{align*}
The Hilbert transform is a prototypical singular integral operator (defined through a principal value integral), which arises in many different applications (c.f. \cite{G08}). A related operator, the \emph{truncated
Hilbert transform} plays an important role in medical imaging and has thus attracted a substantial amount of attention (c.f. \cite{DNCK06} and the references therein). Considering two open bounded intervals $I,J \subset \R$, it is defined as
\begin{align*}
H_{I,J}f := \chi_J H(f \chi_{I}) \mbox{ for all } f \in L^2_{loc}(\R),
\end{align*}
where $\chi_I, \chi_J$ denote the characteristic functions of the two intervals $I$ and $J$. In the sequel, we will also simply use the notation $H_I f$ to refer to the function $H(f \chi_{I})\in L^2(\R)$ for $f \in L^2(I)$.
In the context of medical imaging, the following question is of interest:
\begin{itemize}
\item[Q:]\emph{Is it possible to recover $f\chi_I$ from the knowledge of $H_{I,J}f$?}
\end{itemize}
Mathematically this can be translated into an investigation of the mapping properties of the operators $H_{I,J}$ (in their particular injectivity properties).
These depend crucially on the relative location of the intervals $I,J$, c.f. \cite{APS13}:
\begin{itemize}
\item[(a)] In contrast to the whole space situation, the operators  $H_{I,I}$ are no longer continuously invertible in $L^2(I)$ for bounded intervals $I\subset \R$. However, continuous inversion can be recovered in suitably weighted spaces, c.f. \cite{APS96}, \cite{T51}. Thus, given $H_{I,I} f$, the reconstruction of $f \chi_I$ is stable in suitable function spaces.
\item[(b)] If $\overline{I}\cap \overline{J}= \emptyset$, the operator $H_{I,J}$ is compact. Thus, by basic functional analysis, it no longer has a bounded inverse.
The associated inverse reconstruction problem is consequently strongly ill-posed in general, c.f. \cite{AK14}, \cite{ADK15}. However, in \cite{APS13} Alaifari, Pierce and Steinerberger observed that if certain a priori information is given, it becomes possible to ``continuously" invert the problem (c.f. also the general philosophy outlined in \cite{T43}, \cite{J60}, \cite{B89}, and \cite{LS16} for a similar application of this strategy to the truncated Fourier and Laplace transforms). This can further be precisely quantified.
\item[(c)] If $\overline{I}\cap \overline{J} \neq \emptyset$, but $I \neq J$, one can hope for improved stability properties of the recovery problem. Again this however depends on the precise relation of the intervals $I,J$. For instance in the \emph{interior} situation, where $\overline{I}\subset J$, it was shown in \cite{ADK16} that it is possible to establish a Hölder continuous dependence on the data (c.f. also the comparison of local versus global stability estimates in \cite{ARV09}). As expected, this however degenerates as $I$ approaches $J$.
\end{itemize}
In the sequel, we focus on the worst case scenario (b), and provide quantitative invertibility estimates in this setting. To this end, we view the invertibility problem in a PDE framework, which allows us to borrow tools from this context.

\subsection{Main results and ideas}
As the main objective of this article, we seek to provide quantitative uniqueness and approximation properties for operators, which are similar to the Hilbert transform, by means of robust PDE tools. Here we only appeal to propagation of smallness estimates (which can be viewed as consequences of associated Carleman estimates) and variational principles. This provides an alternative approach to methods, which are used in the literature (c.f. \cite{LP61}, \cite{APS13} and the references therein). In particular, we do not use an explicit characterization of the singular values of the Hilbert transform. As a consequence, it is possible to generalize our results from the one dimensional to the higher dimensional setting (including variable coefficients), c.f. Sections \ref{sec:nD1}, \ref{sec:Perturb}. 

\subsubsection{Quantitative almost invertibility}
Motivated by the examples of medical applications, we are interested in conditional stability estimates for the truncated Hilbert transform. As in \cite{APS13} we identify oscillations as the ``only" obstruction for reconstructing a function $f\in L^2(I)$ from its truncated Hilbert transform $H_{I,J}f\in L^2(J)$.

\begin{prop}[Quantitative unique continuation]
\label{prop:HT}
Let $I,J$ be open, bounded intervals such that $\overline{I}\cap \overline{J} = \emptyset$.
Denote by $H_J:L^2(\R) \rightarrow L^2(\R)$ the truncated Hilbert transform with respect to the open interval $J$ and assume that 
 $g\in H^1(J)$.
 Then, there exist constants  $\tilde{\sigma} >0$ and $C>1$ (which depend only on the relative size and distance of the intervals $I$ and $J$ (c.f. Remark \ref{rmk:dependence}) such that
 \begin{align}
 \label{eq:almost_invert1}
  \| g\|_{H^1(J)}
  \leq  \exp\left( C\left(1+\frac{\|g\|_{H^1(J)}^{\tilde{\sigma}} }{\| g\|_{L^2(J)}^{\tilde{\sigma}}} \right)\right) \|H_J g\|_{L^2(I)}.
 \end{align} 
\end{prop}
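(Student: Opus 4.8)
The plan is to realize $H_J g$ as part of the boundary data of a harmonic function on a planar domain and then to run the classical three–step scheme: (i) convert the given $L^2(I)$–smallness of $H_J g$ into Cauchy–data smallness of a harmonic function on the segment $I\times\{0\}$, (ii) propagate this smallness through the slit domain $\C\setminus\overline J$ up to a small distance $y_0$ above $J$, and (iii) let $y_0\to 0$, optimizing $y_0$ against the resulting loss.

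First I would introduce the Cauchy transform $F(z):=\pi^{-1}\int_J (z-t)^{-1}g(t)\,dt$, which is holomorphic on $\C\setminus\overline J$ and satisfies $\overline{F(\bar z)}=F(z)$; its negative imaginary part $u:=-\Imm F$ is harmonic on $\C\setminus\overline J$, odd in the vertical variable (hence harmonic across $\R\setminus\overline J$, where it vanishes), agrees on the upper half plane with the Poisson extension of $g\chi_J$, has $u(\cdot,0^+)=g$ on $J$ (Sokhotski--Plemelj, here in its $L^2$ form), and obeys $|u(z)|\le C\dist(z,\overline J)^{-1}\|g\|_{L^2(J)}$. On $I\times\{0\}$ one has $u=0$ and $\partial_y u=-\pi^{-1}(H_J g)'$; moreover, since $\overline I$ sits at a fixed distance $d>0$ from $\overline J$, the function $H_J g$ is real–analytic on a complex $d$–neighbourhood of $\overline I$ with all derivatives bounded by $C\|g\|_{L^2(J)}$, so an interpolation estimate gives $\|H_Jg\|_{L^\infty}\lesssim\|H_Jg\|_{L^2(I)}^{1/2}\|g\|_{L^2(J)}^{1/2}$ on a slightly smaller real segment. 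Propagation of smallness from an interval (two–constants inequality, or Schwarz reflection together with a three–circles estimate) then upgrades this to
\begin{equation*}
\sup_{V}|u|\le \eta:=C\,\|g\|_{H^1(J)}^{1-\tau/2}\,\|H_Jg\|_{L^2(I)}^{\tau/2}
\end{equation*}
on a fixed two–dimensional neighbourhood $V$ of a subinterval of $I$, with $\tau=\tau(I,J)\in(0,1)$; here I have crudely bounded $\|g\|_{L^2(J)}\le\|g\|_{H^1(J)}$.

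Next I would chain three–circles inequalities along a string of disks contained in $\C\setminus\overline J$ joining $V$ to the point $x+iy_0$, for $x\in J$ and small $y_0>0$, using the a priori bound $|u|\le C\|g\|_{H^1(J)}\,\dist(\cdot,\overline J)^{-1}$. This yields $|u(x+iy_0)|\le (C\|g\|_{H^1(J)}/y_0)^{1-\mu}\eta^{\mu}$, where $\mu=\mu(x,y_0)\ge c\exp(-c'N)$ and $N$ is the number of disks in the chain. The key geometric point — and the main obstacle — is to verify that $N\lesssim \log(1/y_0)$ uniformly in $x\in J$, including $x$ near the endpoints of $J$; equivalently, that the quasi–hyperbolic distance in $\C\setminus\overline J$ from $V$ to $\{x+iy_0:x\in J\}$ is $O(\log(1/y_0))$. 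This uses that the complement of a segment is ``thick'' near the endpoints of the slit (locally a slit plane), so that the propagation exponent degenerates only polynomially as $y_0\to0$, $\mu\gtrsim y_0^{\kappa}$, rather than exponentially; this polynomial rate is precisely what produces a single (not double) exponential in the final estimate.

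Finally I would combine the interior bound with the Poisson–semigroup estimate $\|g\chi_J-u(\cdot,y_0)\|_{L^2(\R)}\le y_0^{1/4}\|g\chi_J\|_{H^{1/4}(\R)}\lesssim y_0^{1/4}\|g\|_{H^1(J)}$, where the hypothesis $g\in H^1(J)$ enters via boundedness of the zero–extension $H^1(J)\to H^{1/4}(\R)$. Writing $E:=\|g\|_{H^1(J)}$, $m:=\|g\|_{L^2(J)}$, $\epsilon:=\|H_Jg\|_{L^2(I)}$, $\Lambda:=\log(E/\epsilon)$ (the case of bounded $\Lambda$ being trivial since always $\epsilon\le\pi E$), this gives, for all small $y_0$,
\begin{equation*}
m\;\le\;\frac{CE}{y_0}\Big(\frac{\epsilon}{E}\Big)^{c\,y_0^{\kappa}}+C E\,y_0^{1/4}.
\end{equation*}
Substituting $\gamma:=c\,y_0^{\kappa}$ and optimizing (the two terms balance near $\gamma\sim\Lambda^{-1}\log\Lambda$) yields $m\le CE(\log(E/\epsilon))^{-\nu}$ for some $\nu=\nu(I,J)>0$, i.e. $\log(E/\epsilon)\le (CE/m)^{1/\nu}$, i.e.
\begin{equation*}
\|g\|_{H^1(J)}\le \exp\!\Big((CE/m)^{1/\nu}\Big)\|H_Jg\|_{L^2(I)}\le \exp\!\Big(C\big(1+(\|g\|_{H^1(J)}/\|g\|_{L^2(J)})^{\tilde\sigma}\big)\Big)\|H_Jg\|_{L^2(I)},
\end{equation*}
with $\tilde\sigma:=1/\nu$, which is the claimed estimate \eqref{eq:almost_invert1}; the dependence of $C,\tilde\sigma$ on $I,J$ enters only through the quantities controlling $\tau$, $\kappa$ and $\nu$, that is, through the relative size and distance of $I$ and $J$.
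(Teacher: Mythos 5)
Your argument is sound in substance and does reach the claimed estimate, but it implements the two end-steps differently from the paper while sharing the same core. The shared core is the chain-of-disks propagation with $N\lesssim \log(1/y_0)$ steps and an exponent degenerating only polynomially, $\mu\gtrsim y_0^{\kappa}$, followed by optimizing the height against the ratio $\|g\|_{H^1(J)}/\|g\|_{L^2(J)}$; this is exactly Proposition \ref{prop:small} combined with the choice of $\delta$ in Section \ref{sec:invert}, and your concern about the endpoints of $J$ is in fact harmless, since for the vertical descent above any $x\in\overline{J}$ one has $\dist(x+ih,\overline{J})=h$, so the count $N\lesssim\log(1/y_0)$ is uniform in $x$, just as in Remark \ref{rmk:dependence}. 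What you do differently: (i) you realize the data via the Cauchy transform $F$, i.e. the Dirichlet (Poisson) extension of $g\chi_J$, so that on $I\times\{0\}$ both $\Imm F=0$ and $\Ree F=\pi^{-1}H_Jg$ are controlled, and the two-constants theorem for the holomorphic $F$ converts the $L^2(I)$-smallness (after your $L^\infty$ interpolation) into smallness on a two-dimensional neighbourhood --- note this step should indeed be run on $|F|$ (or on the full Cauchy data), since smallness of $u$ alone on a one-dimensional segment feeds no three-circles argument; the paper instead uses the Neumann extension and the Lebeau--Robbiano boundary--bulk interpolation inequality \cite{LR95} at this point. (ii) You recover $g$ at the boundary from $u(\cdot,y_0)$ through the elementary semigroup bound $\|P_{y_0}f-f\|_{L^2(\R)}\lesssim y_0^{1/4}\|f\|_{H^{1/4}(\R)}$ together with boundedness of the zero extension $H^1(J)\to H^{1/4}(\R)$; the paper, whose boundary datum is the Neumann trace $\p_2 u$, needs the more involved Lemma \ref{lem:boundary} (fundamental theorem plus Green's function estimates). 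Your route buys a more elementary, self-contained one-dimensional proof; what it costs is robustness, since holomorphy and the two-constants theorem are specific to the plane, so, unlike the paper's argument, it does not carry over to the Riesz-transform and variable-coefficient settings of Sections \ref{sec:nD1} and \ref{sec:Perturb}, which is the main motivation for the paper's choice of tools.
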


Let us briefly comment on this result: As the explicit characterization of the singular values of the Hilbert transform shows, the
\begin{itemize}
\item the exponential nature of the estimate \eqref{eq:almost_invert1} is optimal, 
\item the exponent $\tilde{\sigma}$ which appears in the estimate is in general far from the optimal one, which can be obtained from the singular value characterization for the truncated Hilbert transform, c.f. \cite{APS13}. 
\end{itemize}
However, in contrast to the methods which rely on explicit eigenfunction asymptotics, the present approach is very robust and generalizes to the higher dimensional situation with quite general domain geometries. \\
We further remark that \eqref{eq:almost_invert1} can also be read as a generalized injectivity or quantitative unique continuation result for the truncated Hilbert transform. 

\subsubsection{Quantitative approximation properties} 
While it is well-known that for disjoint open intervals $I,J$ the truncated Hilbert transform has a dense image as a mapping from $L^2(I)$ to $L^2(J)$ (c.f. Corollary \ref{cor:inj_dens} below, in which this is derived by relying on the ideas from \cite{GSU16}), its more precise approximation properties are only quantified in particular situations (c.f. \cite{ADK16}). Relying on propagation of smallness results and variational principles, we provide explicit bounds on the \emph{cost} of approximation: As the inversion operator is not continuous, it is expected that an increase of the approximation quality leads to an increase in the norm (whose size we interpret at the \emph{cost} of approximation) of the approximating functions. We provide upper bounds on this cost.

\begin{prop}
\label{prop:HT2}
Let $I,J$ be open, bounded intervals such that $\overline{I}\cap \overline{J} = \emptyset$.
Let $\epsilon>0$ and let $h\in L^2(J)$. Then, there exist constants  $\tilde{\sigma} >0$ and $C>1$ (which depend only on the relative size and distance of the intervals $I$ and $J$ (c.f. Remark \ref{rmk:dependence})) and functions $f\in L^2(I)$ such that
\begin{align}
\label{eq:approx}
\|h- H_I f\|_{L^2(J)} \leq \epsilon  \mbox{ and }
\|f\|_{L^2(I)} \leq e^{C(1+\|h\|_{H^1(J)}^{\sigma}/\epsilon^{\sigma})} \|h\|_{L^2(J)}.
\end{align}
\end{prop}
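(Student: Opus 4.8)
The plan is to obtain Proposition~\ref{prop:HT2} from Proposition~\ref{prop:HT} by the standard duality between quantitative approximation and quantitative unique continuation, realised through a convex variational problem, and to control the resulting minimiser by applying Proposition~\ref{prop:HT} to a low-frequency truncation of it. I will take the exponent $\sigma$ to be the exponent $\tilde\sigma$ of Proposition~\ref{prop:HT}; we may also assume $\|h\|_{L^2(J)}>\epsilon$, as $f=0$ is admissible otherwise. Writing $A\colon L^2(I)\to L^2(J)$, $Af:=\chi_J H(f\chi_I)$, so that $A^*g=-\chi_I H(g\chi_J)$, hence $\|A^*g\|_{L^2(I)}=\|H_J g\|_{L^2(I)}$ and $\|h-H_I f\|_{L^2(J)}=\|h-Af\|_{L^2(J)}$, I would consider the functional
\[
\mathcal F_\epsilon(g):=\tfrac12\|H_J g\|_{L^2(I)}^2+\epsilon\|g\|_{L^2(J)}-\langle h,g\rangle_{L^2(J)},\qquad g\in L^2(J).
\]
Since $g\mapsto\chi_I H(g\chi_J)$ is bounded and injective on $L^2(J)$ (Corollary~\ref{cor:inj_dens}), $\mathcal F_\epsilon$ is continuous, strictly convex and -- by the usual weak compactness argument for functionals of this type (normalising a minimising sequence, whose weak limit then lies in the kernel of $A^*$ and is thus zero) -- coercive, so it attains its infimum at a unique $\hat g\in L^2(J)$. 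If $\hat g=0$ the minimality forces $\|h\|_{L^2(J)}\le\epsilon$, contradicting the reduction; hence $\hat g\neq0$, the Euler--Lagrange equation is $AA^*\hat g+\epsilon\,\hat g/\|\hat g\|_{L^2(J)}=h$, and $\hat f:=A^*\hat g=-\chi_I H(\hat g\chi_J)$ satisfies $\|h-H_I\hat f\|_{L^2(J)}=\epsilon$. It remains to bound $\|\hat f\|_{L^2(I)}=\|H_J\hat g\|_{L^2(I)}$.

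Testing minimality against $g=0$ yields the variational inequality $\tfrac12\|H_J\hat g\|_{L^2(I)}^2\le\langle h,\hat g\rangle_{L^2(J)}-\epsilon\|\hat g\|_{L^2(J)}$. Let $P_N$ be the orthogonal projection in $L^2(J)$ onto the span of the first $N$ Neumann eigenfunctions of $-\Delta$ on $J$, and fix $N\sim\|h\|_{H^1(J)}/\epsilon$ large enough that $\|(I-P_N)h\|_{L^2(J)}\le\epsilon/2$; this is possible because $h\in H^1(J)$ and $\|(I-P_N)h\|_{L^2(J)}\lesssim N^{-1}\|h\|_{H^1(J)}$. Decomposing $\langle h,\hat g\rangle=\langle P_N h,P_N\hat g\rangle+\langle(I-P_N)h,\hat g\rangle$ and absorbing the second term into $-\epsilon\|\hat g\|_{L^2(J)}$, the variational inequality gives simultaneously
\[
\tfrac12\|H_J\hat g\|_{L^2(I)}^2\le\|h\|_{L^2(J)}\,\|P_N\hat g\|_{L^2(J)}
\quad\text{and}\quad
\|\hat g\|_{L^2(J)}\le 2\epsilon^{-1}\|h\|_{L^2(J)}\,\|P_N\hat g\|_{L^2(J)}.
\]

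The heart of the argument is a low-frequency observability estimate $\|P_N\hat g\|_{L^2(J)}\le e^{CN^\sigma}\|H_J\hat g\|_{L^2(I)}$. Since $P_N\hat g$ is a finite linear combination of smooth eigenfunctions with $\|P_N\hat g\|_{H^1(J)}\le CN\|P_N\hat g\|_{L^2(J)}$, Proposition~\ref{prop:HT} applied to $P_N\hat g$ gives $\|P_N\hat g\|_{L^2(J)}\le\|P_N\hat g\|_{H^1(J)}\le e^{CN^\sigma}\|H_J(P_N\hat g)\|_{L^2(I)}$. On the other hand, for $x\in I$ the functions $k_x(t):=(x-t)^{-1}$, $t\in J$, are real-analytic on $\overline J$ uniformly in $x$ (here $\overline I\cap\overline J=\emptyset$ enters), so their Neumann--Fourier coefficients on $J$ decay exponentially; writing $(H_J(I-P_N)\hat g)(x)=\langle\hat g,(I-P_N)k_x\rangle_{L^2(J)}$ for $x\in I$ one obtains $\|H_J(I-P_N)\hat g\|_{L^2(I)}\le Ce^{-cN}\|\hat g\|_{L^2(J)}$ with $c>0$ depending only on $|J|$ and $\dist(I,J)$. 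Combining this with $\|H_J(P_N\hat g)\|_{L^2(I)}\le\|H_J\hat g\|_{L^2(I)}+\|H_J(I-P_N)\hat g\|_{L^2(I)}$ and with the bound $\|\hat g\|_{L^2(J)}\le2\epsilon^{-1}\|h\|_{L^2(J)}\|P_N\hat g\|_{L^2(J)}$, and using $\sigma<1$ together with $N\sim\|h\|_{H^1(J)}/\epsilon\gtrsim\log(\|h\|_{L^2(J)}/\epsilon)$ to absorb the term $e^{CN^\sigma}e^{-cN}\epsilon^{-1}\|h\|_{L^2(J)}\|P_N\hat g\|_{L^2(J)}$ to the left-hand side, I arrive at $\|P_N\hat g\|_{L^2(J)}\le2e^{CN^\sigma}\|H_J\hat g\|_{L^2(I)}$. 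Inserting this into the first variational inequality gives $\|H_J\hat g\|_{L^2(I)}\le4e^{CN^\sigma}\|h\|_{L^2(J)}$, and since $N^\sigma\sim(\|h\|_{H^1(J)}/\epsilon)^\sigma$ this is the asserted estimate for $\|\hat f\|_{L^2(I)}$.

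I expect the main obstacle to be precisely this low-frequency observability step. The unconditional version $\|P_N g\|_{L^2(J)}\lesssim e^{CN^\sigma}\|H_J g\|_{L^2(I)}$ fails for general $g\in L^2(J)$ -- one can make $\|H_J g\|_{L^2(I)}$ arbitrarily small while $\|P_N g\|_{L^2(J)}=1$ by superimposing a large, very high-frequency component -- so one genuinely has to exploit the a priori information $\|\hat g\|_{L^2(J)}\lesssim\epsilon^{-1}\|h\|_{L^2(J)}\|P_N\hat g\|_{L^2(J)}$ coming from the minimality of $\hat g$, together with the exponential gain across the spectral gap caused by $\overline I\cap\overline J=\emptyset$ and the fact that the propagation of smallness exponent $\sigma$ of Proposition~\ref{prop:HT} satisfies $\sigma<1$, in order to close the estimate. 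The remaining ingredients -- coercivity and the Euler--Lagrange analysis of $\mathcal F_\epsilon$, the $H^1(J)$-approximation properties of $P_N$, and the exponential decay of the Fourier coefficients of $k_x$ -- are routine.
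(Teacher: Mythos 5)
Your variational setup coincides with the paper's (the functional $\G_{\epsilon}$, coercivity via the qualitative injectivity of Lemma \ref{cor:inj_dens}, the Euler--Lagrange analysis and the choice $f=-\chi_I H_J\bar g$, giving $\|h-H_If\|_{L^2(J)}\leq\epsilon$), but the cost estimate contains a genuine gap in the ``low-frequency observability'' step, in two places. First, the claimed exponential decay of the Neumann--Fourier coefficients of $k_x(t)=(x-t)^{-1}$ on $J$ is false: for the cosine (Neumann eigenfunction) basis on $J=(0,1)$, integrating by parts twice gives $c_k(x)=\frac{2}{(k\pi)^2}\bigl((-1)^k k_x'(1)-k_x'(0)\bigr)+O(k^{-3})$, and since $k_x'$ does not vanish at $\partial J$ the coefficients decay only like $k^{-2}$ (the even periodic extension of an analytic function is merely Lipschitz at the reflection points). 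Hence $\|H_J(I-P_N)\hat g\|_{L^2(I)}\lesssim N^{-3/2}\|\hat g\|_{L^2(J)}$ rather than $e^{-cN}\|\hat g\|_{L^2(J)}$, and a polynomial gain cannot absorb the factor $e^{CN^{\tilde\sigma}}$ produced by applying Proposition \ref{prop:HT} to $P_N\hat g$. Second, even if you repair the decay by switching to a basis adapted to analyticity (e.g. Legendre polynomials, where the tail of $k_x$ does decay like $\rho^{-N}$, at the price of $\|P_Ng\|_{H^1}\lesssim N^2\|P_Ng\|_{L^2}$), your absorption argument needs $\tilde\sigma<1$ (respectively $\tilde\sigma<1/2$). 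This is not available: in the paper $\tilde\sigma$ is any exponent larger than $\tfrac52\,\sigma$ with $\sigma=C|\ln\gamma|$ coming from an iterated three-balls/Lebeau--Robbiano argument, and the paper explicitly stresses that this exponent is far from optimal; nothing guarantees it is below $1$. So the decisive inequality $\|P_N\hat g\|_{L^2(J)}\leq 2e^{CN^{\sigma}}\|H_J\hat g\|_{L^2(I)}$ is not established, and with it the bound on $\|f\|_{L^2(I)}$ collapses.

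For comparison, the paper avoids bootstrapping Proposition \ref{prop:HT} altogether: in the lower bound for the functional it replaces the pairing $\int_J h\,\p_2 v(x,0)\,dx$ by $\int_J h\,\p_2 v(x,\delta)\,dx$, checks via $\|\p_2 v(\cdot,0)-\p_2 v(\cdot,\delta)\|_{\dot H^{-1}(J)}\lesssim\delta\|g\|_{L^2(J)}$ that the difference is absorbed by $\tfrac\epsilon2\|g\|_{L^2(J)}$ once $\delta\sim\epsilon/\|h\|_{H^1(J)}$, and then applies the propagation of smallness estimate of Proposition \ref{prop:small} at height $\delta$ together with Young's inequality and the identity $\min\G_\epsilon=-\tfrac12\|f\|_{L^2(I)}^2$. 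That route only uses the $\delta$-quantified estimate \eqref{eq:quant_control} and imposes no smallness condition on the exponent, which is precisely what your spectral-truncation argument would require. If you want to salvage your approach, you would need either a version of the observability estimate with subcritical exponent (not provided by the paper's method) or a frequency decomposition performed at height $\delta$ in the extension, which essentially reproduces the paper's argument.
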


In Proposition \ref{prop:HT}, the exponential dependence on $\epsilon$ is optimal, while the explicit power $\sigma$ is certainly non-optimal. This result should be viewed in the context of the recent interest in approximation properties of nonlocal operators, c.f. \cite{DSV14}, \cite{DSV16}, \cite{GSU16}. Here we do not only provide an approximation result, but also give \emph{quantitative estimates} on the \emph{cost of control}. In the context of control problems the question on the cost of control has a long tradition (c.f. for example \cite{LR95}, \cite{FZ00} and the references therein). For the Hilbert transform these bounds can also be obtained by means of the singular value characterization, but for more general nonlocal operators, for instance including the ones which are treated in Sections \ref{sec:nD1}, \ref{sec:Perturb}, these bounds seem to be new.

\subsubsection{Main ideas}
Quantitative propagation of smallness estimates lie at the heart of both the uniqueness and the approximation results from Proposition \ref{prop:HT} and \ref{prop:HT2}. These are obtained with the aid of robust PDE methods in the form of three balls arguments (which themselves are based on Carleman inequalities), interpolation estimates and elliptic estimates. Similarly to the unique continuation results for nonlocal elliptic operator, the problem at hand becomes accessible to these methods after realizing it by means of a harmonic extension as an elliptic \emph{local} operator in the upper half-plane (c.f. Lemma \ref{lem:Hilbert}).

\subsection{Generalizations to higher dimensions and perturbations}

A key feature of our method is its robustness. In contrast to the more precise singular value decompositions, it depends much less sensitively on the specific geometry of the underlying domains. Thus, the results of Propositions \ref{prop:HT} and \ref{prop:HT2} directly generalize to the context of Riesz transforms (c.f. Propositions \ref{prop:approx_Riesz}, \ref{prop:quant_unique2_Riesz}). Moreover, it is possible to treat (a restricted class of) variable coefficient perturbations of these operators (c.f. Propositions \ref{prop:approx_Riesz_a}, \ref{prop:quant_unique2_Riesz_a}). We however expect that similar methods can be used for a much larger class of operators.

\subsection{Organization of the article}
We conclude the introduction by commenting on the organization of the remaining article. In Section \ref{sec:QHT} we provide detailed arguments for the quantitative uniqueness and approximation properties of the Hilbert transform. The core of the argument consists of the propagation of smallness estimates from Section \ref{sec:small}. With the arguments for the Hilbert transform at hand, in Sections \ref{sec:nD1} and \ref{sec:Perturb} we further explain generalizations of these ideas to Riesz transforms and perturbations of these. 

\section[Quantitative Estimates]{Quantitative Estimates for the Truncated Hilbert Transform}
\label{sec:QHT}

As an introduction to the ideas leading to the quantitative properties of the nonlocal operators at hand, we present detailed arguments for the Hilbert transform. While this problem is also accessible by other arguments, e.g. by asymptotic properties of the associated singular value decomposition which yield very precise estimates, we view the Hilbert transform as a model problem, which allows us to introduce our robust PDE based tools. With these arguments at hand, extensions to more general operators and the higher dimensional situation are then straightforward. These are explained  in the later parts of the article, c.f. Sections \ref{sec:nD1} and \ref{sec:Perturb}. \\
The section is divided into four main parts: We first briefly recall the extension point of view for the Hilbert transform in Section \ref{sec:HT}. Then, based on this, we deduce first qualitative properties of the Hilbert transform in Section \ref{sec:qual}. With this at hand, in Section \ref{sec:small} we proceed to the key ingredient of our argument and discuss quantitative propagation of smallness estimates. As direct consequences of this, we finally infer the quantitative injectivity and approximation properties of Propositions \ref{prop:HT} and \ref{prop:HT2}.

\subsection{Realizations of the truncated Hilbert transform}
\label{sec:HT}

In this section we recall the characterization of the Hilbert transform in terms of a harmonic extension operator (c.f. for instance Section 4.1.2. in \cite{G08} on the connection of the Hilbert transform and analytic functions). This harmonic extension point of view makes the problem accessible to robust PDE techniques, which we will exploit systematically in the following sections.

\begin{lem}[\cite{G08}, Theorem 4.1.5]
\label{lem:Hilbert}
Let $H:L^2(\R) \rightarrow L^2(\R)$ denote the Hilbert transform. Then it is also possible 
 to realize this operator by means of the Neumann harmonic extension as 
 \begin{align*}
  H f (x_1)= \p_1 N (f)(x_1,0),
 \end{align*}
where $N:C^{\infty}_0(\R)\rightarrow H^1_{loc}(\R^2)$, $f\mapsto u:=G_N \ast_{x_1} f$ with $G_N(x) = \ln(|x|)$, denotes a solution 
operator to
\begin{align*}
-\D u& = 0 \mbox{ in } \R^{2}_+,\\
\p_2 u & = f \mbox{ on } \R \times \{0\}.
\end{align*}
\end{lem}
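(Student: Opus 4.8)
The plan is to verify directly that the explicit logarithmic potential $u := G_N \ast_{x_1} f$ solves the stated Neumann problem and that its tangential derivative on the boundary reproduces $Hf$. For $f \in C_0^\infty(\R)$ and $x_2 > 0$, I would first write, with $z = x_1 + i x_2$,
\begin{align*}
 u(x_1,x_2) = \int_{\R} \ln|(x_1 - t, x_2)| \, f(t)\, dt = \Ree \int_{\R} \log(z - t)\, f(t)\, dt =: \Ree F(z),
\end{align*}
where $\log$ is the branch with argument in $(0,\pi)$, which is admissible since $z - t \in \R^2_+$ for every $t \in \R$. Since $f$ has compact support, the $t$-integration runs over a fixed compact set on which the integrand and all its $z$-derivatives are bounded, so differentiation under the integral sign is legitimate; harmonicity of $u$ in $\R^2_+$ follows from that of $(x_1,x_2) \mapsto \ln|(x_1-t,x_2)|$ away from $(t,0)$, and $F$ is holomorphic in $\R^2_+$ with $F'(z) = \int_\R \frac{f(t)}{z-t}\, dt$. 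The Cauchy--Riemann equations then give $\partial_1 u = \Ree F'$ and $\partial_2 u = -\Imm F'$, that is,
\begin{align*}
 \partial_1 u(x_1,x_2) = \int_{\R} \frac{(x_1 - t)\, f(t)}{(x_1-t)^2 + x_2^2}\, dt, \qquad
 \partial_2 u(x_1,x_2) = \int_{\R} \frac{x_2\, f(t)}{(x_1-t)^2 + x_2^2}\, dt.
\end{align*}

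Next I would pass to the limit $x_2 \to 0^+$. In the formula for $\partial_2 u$ the kernel $\frac{1}{\pi}\frac{x_2}{(x_1-t)^2+x_2^2}$ is the Poisson kernel of the half-plane, an approximate identity, so $\partial_2 u(\cdot,x_2)$ converges locally uniformly to $f$ up to the normalization constant $\pi$ — which simply fixes which multiple of $\ln|x|$ one takes as $G_N$ in order to obtain the boundary condition in the stated form. In the formula for $\partial_1 u$ the kernel $\frac{1}{\pi}\frac{x_1-t}{(x_1-t)^2+x_2^2}$ is the conjugate Poisson kernel, and the classical fact that conjugate Poisson integrals converge to the Hilbert transform — equivalently, that $\frac{1}{\pi}\frac{s}{s^2+x_2^2} \to \mathrm{p.v.}\,\frac{1}{\pi s}$ as $x_2 \to 0$ — yields $\partial_1 u(x_1,x_2) \to \mathrm{p.v.}\int_\R \frac{f(t)}{x_1 - t}\, dt = Hf(x_1)$, i.e. $\partial_1 N(f)(x_1,0) = Hf(x_1)$. (Alternatively, one can carry out the whole computation on the Fourier side in $x_1$, where the decaying Neumann extension has symbol $-|\xi|^{-1}\hat f(\xi)$ and tangential differentiation introduces the factor $i\xi$, producing the multiplier $-i\,\mathrm{sgn}(\xi)$ of the Hilbert transform; or simply invoke Theorem 4.1.5 of \cite{G08}.)

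The one genuinely delicate point is the boundary passage for $\partial_1 u$, since the limiting integral is only a principal value and the convergence must be uniform in $x_2 \downarrow 0$. I would handle it by fixing $x_1$, choosing $\delta > 0$, and splitting
\begin{align*}
 \partial_1 u(x_1,x_2) = \int_{|t - x_1| < \delta} \frac{(x_1 - t)\,(f(t) - f(x_1))}{(x_1-t)^2 + x_2^2}\, dt + \int_{|t - x_1| \geq \delta} \frac{(x_1 - t)\, f(t)}{(x_1-t)^2 + x_2^2}\, dt,
\end{align*}
using that $\int_{|t-x_1|<\delta} \frac{(x_1-t)\, dt}{(x_1-t)^2+x_2^2} = 0$ by oddness. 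The first integral is $O(\delta\,\|f'\|_{L^\infty})$ uniformly in $x_2$ because $|f(t)-f(x_1)| \le \|f'\|_{L^\infty}|t-x_1|$ and $\frac{|x_1-t|^2}{(x_1-t)^2+x_2^2}\le 1$; the second converges, by dominated convergence as $x_2 \to 0$, to $\int_{|t-x_1|\ge\delta}\frac{f(t)}{x_1-t}\,dt$, whose limit as $\delta\to 0$ is $Hf(x_1)$ by definition. A standard $\epsilon/3$ argument then gives $\partial_1 u(x_1,x_2)\to Hf(x_1)$. Finally, the bounds $|\partial_1 u|,|\partial_2 u| \lesssim \|f\|_{L^\infty}$ (the second kernel integrates to $\pi$, the first has an integrable logarithmic majorant since $f$ is compactly supported and bounded) show $N(f) \in W^{1,\infty}_{\mathrm{loc}}(\R^2) \subset H^1_{\mathrm{loc}}(\R^2)$, completing the verification of all the asserted properties.
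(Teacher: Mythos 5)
Your argument is correct, and it proves rather than cites the classical fact on which the paper relies: the paper's proof of Lemma \ref{lem:Hilbert} is a one-line reference to \cite{G08}, Theorem 4.1.5, i.e.\ to the statement that $\p_1 N(f)(\cdot,x_2)-H^{x_2}(f)\to 0$ in $L^p$ as $x_2\to 0$, whereas you re-derive exactly this conjugate-Poisson-kernel limit from scratch (logarithmic potential, Cauchy--Riemann identities, Poisson versus conjugate Poisson kernel, and an $\epsilon/3$ splitting exploiting the oddness of the kernel and the Lipschitz bound on $f$). This buys a self-contained, pointwise (indeed locally uniform) statement for $C^{\infty}_0$ data, which is all the lemma needs given how $N$ is declared, while the cited theorem gives the weaker-topology but more general $L^p$ statement for $L^p$ data that the paper records in Remark \ref{rmk:interpret}; the mathematical core is the same. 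Two small points, neither a gap: (i) your remark about the factor $\pi$ is the right way to read the statement --- with $G_N=\ln|x|$ one gets $\p_2 u\to\pi f$ while $\p_1 u\to \mathrm{p.v.}\int f(t)(x_1-t)^{-1}dt$, so the boundary condition and the paper's unnormalized definition of $H$ cannot both hold with constant $1$; the paper itself is explicitly cavalier about this normalization, so absorbing it into the choice of multiple of $\ln|x|$ is fine, but you should say that the Hilbert-transform identity then holds up to the same fixed constant. (ii) The closing bound $|\p_1 u|\lesssim\|f\|_{L^\infty}$ is not literally correct uniformly up to the boundary: the majorant $|s|/(s^2+x_2^2)$ only yields $|\p_1 u(x_1,x_2)|\lesssim \|f\|_{L^\infty}\log(1/x_2)$ for small $x_2$. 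Either this logarithmic bound (which is still square integrable near $x_2=0$, hence enough for $H^1_{loc}$), or the locally uniform boundedness of $\p_1 u$ that your own $\delta$-splitting already provides (at the price of involving $\|f'\|_{L^\infty}$), closes the regularity claim; as written the justification is slightly off, but the conclusion stands.
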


\begin{rmk}
\label{rmk:interpret}
Here and in the sequel, the Hilbert transform is understood in the principal value sense, i.e.
\begin{align*}
Hf(x) := p.v. \int\limits_{\R} \frac{f(t)}{x-t}dt 
:= \lim\limits_{\epsilon \rightarrow 0} \int\limits_{|x-t|>\epsilon} \frac{f(t)}{x-t}dt. 
\end{align*}
By cancellation, this is well-defined on Lipschitz functions $f$ and can then be extended by continuity to $f\in L^p(\R^n)$ with $p\in(1,\infty)$ (c.f. the discussion in \cite{G08}).\\
The operator $\p_1 N (f)(x_1,0)$ is regarded as an $L^p$ with $1\leq p <\infty$ limit of the conjugate Poisson kernel:
\begin{align*}
\p_1 N(f)(x_1,0) =  \lim\limits_{z\rightarrow 0} \int\limits_{\R} f(y)\frac{x-y}{|(x-y,z)|^2} dy.
\end{align*}
This limit agrees with the Hilbert transform \cite{G08}, Theorem 4.1.5.
\end{rmk}

\begin{rmk}
\label{rmk:unique}
We remark that modifying the function $u$ from Lemma \ref{lem:Hilbert} by adding constants to it, does not change the properties of the mapping $f\mapsto \p_1 N(f)$. In the sequel, we will frequently exploit this observation, when dealing with local properties of harmonic functions.
\end{rmk}

\begin{proof}
The claimed identity follows by either adopting the principal value or the Fourier approach and by recalling the principal value or Fourier definition of the Hilbert transform. Indeed, as explained in \cite{G08}, Theorem 4.1.5, for any $1\leq p <\infty$ and $f\in L^p(\R)$ it holds that
\begin{align*}
\p_1 N(f)(x_1, x_2) - H^{x_2}(f) \rightarrow 0 \mbox{ in } L^p
\end{align*}
as $x_2 \rightarrow 0$, where
\begin{align*}
H^{\epsilon}(f) = \int\limits_{|x-y|>\epsilon} \frac{f(t)}{x-t}dt,
\end{align*}
denotes the standard regularization of the (whole space) Hilbert transform.
\end{proof}

\begin{rmk}
Alternatively, assuming that all quantities are well-defined (which is e.g. the case if $\F f(0)=0$), on the Fourier side we obtain that (up to constants) $\F(G_N \ast_{x_1} f)(\xi) = \frac{\F(f)(\xi)}{|\xi|}e^{-x_{2} |\xi|} $. Hence, $ \F( \p_1 G_N \ast_{x_1} f)(\xi) = i\sign(\xi) \F(f)(\xi)e^{-x_{2} |\xi|} $.
\end{rmk}

As an important, well-known property of the truncated Hilbert transform, we note that $H_{I,J}$ and $-H_{J,I}$ are adjoint operators:

\begin{lem} 
\label{lem:adj}
Let $I,J \subset \R$ be two intervals. Then we have that
\begin{align*}
(H_I f,g)_{L^2(J)} = -(H_J g,f)_{L^2(I)}.
\end{align*}
\end{lem}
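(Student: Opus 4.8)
The plan is to establish the adjointness relation $(H_I f, g)_{L^2(J)} = -(H_J g, f)_{L^2(I)}$ by reducing it to the well-known skew-adjointness of the whole-space Hilbert transform, namely $(Hu, v)_{L^2(\R)} = -(u, Hv)_{L^2(\R)}$ for $u,v \in L^2(\R)$. This whole-space identity is itself a consequence of the Fourier multiplier representation $\widehat{Hu}(\xi) = -i\,\sign(\xi)\,\widehat{u}(\xi)$ (or, depending on normalization, $i\,\sign(\xi)$): since the multiplier $m(\xi) = -i\,\sign(\xi)$ satisfies $\overline{m(\xi)} = -m(\xi)$, Plancherel's theorem immediately yields $(Hu,v)_{L^2} = \int m(\xi)\widehat u(\xi)\overline{\widehat v(\xi)}\,d\xi = -\int \widehat u(\xi)\overline{m(\xi)\widehat v(\xi)}\,d\xi = -(u, Hv)_{L^2}$. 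For real-valued functions one can alternatively argue directly from the principal-value kernel: the kernel $\frac{1}{x-t}$ is antisymmetric under interchanging $x$ and $t$, so after justifying the use of Fubini on the truncated (and hence absolutely integrable) approximations $H^\epsilon$ and passing to the limit, the sign flip appears.

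Concretely, I would unwind the truncated notation. By definition $H_I f = H(f\chi_I)$ and, for the pairing over $J$, only the values of this function on $J$ matter; writing $u := f\chi_I \in L^2(\R)$ and $v := g\chi_J \in L^2(\R)$, we have
\begin{align*}
(H_I f, g)_{L^2(J)} = \int_J H(f\chi_I)\, g \, dx = \int_\R H(u)\, v\, dx = (Hu, v)_{L^2(\R)},
\end{align*}
where the middle equality uses $v = g\chi_J$. Applying whole-space skew-adjointness gives $(Hu,v)_{L^2(\R)} = -(u, Hv)_{L^2(\R)} = -(Hv, u)_{L^2(\R)}$ (the last step since both are real, or by taking the relevant real pairing). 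Finally $-(Hv, u)_{L^2(\R)} = -\int_\R H(g\chi_J)\, f\chi_I\, dx = -\int_I H(g\chi_J)\, f\, dx = -(H_J g, f)_{L^2(I)}$, which is the claim.

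The main technical point — and the only place where care is genuinely needed — is the justification of the whole-space skew-adjointness for $L^2$ functions, since the Hilbert transform is defined there by a limiting/continuity procedure (as recalled in Remark \ref{rmk:interpret}) rather than by an absolutely convergent integral. I would handle this either by the Plancherel argument above (cleanest, since $H$ is a bounded operator on $L^2$ with purely imaginary, odd symbol, hence skew-adjoint), or by first proving the identity for Schwartz (or Lipschitz compactly supported) functions via Fubini on the truncations $H^\epsilon$, where $|x-t|>\epsilon$ makes the double integral absolutely convergent and the kernel antisymmetry is transparent, and then extending by density and $L^2$-boundedness of $H$. Everything else is bookkeeping with the cutoffs $\chi_I, \chi_J$. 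Since the lemma is flagged as "well-known," a short proof invoking the Fourier multiplier and Plancherel, together with the observation $\overline{\sign(\xi)} = \sign(\xi)$ and the factor of $i$, is the most economical route and is what I would write.
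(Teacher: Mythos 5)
Your argument is correct and follows essentially the same route as the paper: insert the cutoffs $\chi_I,\chi_J$ to rewrite both pairings over $\R$, then use the Fourier multiplier characterization of $H$ together with Plancherel to move the purely imaginary odd symbol onto the other factor, producing the sign flip. The paper's proof is exactly this computation, so your proposal matches it (your kernel-antisymmetry alternative is a valid but unnecessary extra).
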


\begin{proof}
The proof is a direct consequence of the multiplier characterization of the Hilbert transform and Plancherel's theorem:
\begin{align*}
(H_I f, g)_{L^2(J)} 
&= (H (\chi_I f), \chi_J g)_{L^2(\R)} 
= \left(i \sgn(\xi) \F(\chi_I f), \F(\chi_J g)\right)_{L^2(\R)}\\
&= -\left( \F(\chi_I f), i \sgn(\xi) \F(\chi_J g)\right)_{L^2(\R)}
= -(\chi_I f, H(\chi_J g))_{L^2(\R)} \\
&= -(f, H_J g)_{L^2(I)}. \qedhere
\end{align*}
\end{proof}

\begin{rmk}
\label{rmk:alt_proof}
As an alternative to the arguments by means of the multiplier characterization, it would also have been possible to use the harmonic extension characterization in connection with an integration by parts to prove Lemma \ref{lem:adj}.
\end{rmk}

\subsection[Qualitative unique continuation and approximation results]{Qualitative unique continuation and approximation results for the truncated Hilbert transform}
\label{sec:qual}

As a first consequence of the localization and extension point of view from Lemma \ref{lem:Hilbert}, in this section we exploit (weak) boundary unique continuation results for the Laplacian \cite{EA97}, \cite{AEK95}, \cite{KN98} (or equivalently, weak unique continuation results for the half-Laplacian, c.f. \cite{Rue15}). This entails two well-known properties of the truncated Hilbert transform on $L^2$: We deduce its injectivity and the fact that it has a dense image.

\begin{lem}[Injectivity and density]
\label{cor:inj_dens}
Let $I,J \subset \R$ be open intervals with $\overline{I}\cap \overline{J} = \emptyset$.
Denote by $H_I$ the truncated Hilbert transform associated with the interval $I$. Then, we have the following properties:
\begin{itemize}
\item[(a)] If for some $f\in L^2(I)$ and for all $x \in J$ it holds that $H_I f (x) = 0$, then $f=0$ as a function in $L^2(I)$.
\item[(b)] The set $\{H_I g: g \in C^{\infty}_0(I)\}$ is dense in $L^2(J)$.
\end{itemize}
\end{lem}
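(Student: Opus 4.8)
The plan is to deduce both statements from the harmonic extension picture of Lemma~\ref{lem:Hilbert} combined with a (weak) boundary unique continuation principle for harmonic functions. For part~(a): given $f \in L^2(I)$ with $H_I f = 0$ on $J$, I would form the harmonic extension $u := N(f)$ on $\R^2_+$. Since $f$ is supported in $\overline{I}$, on the boundary patch $J \times \{0\}$ (which is disjoint from $\supp f$) we have $\p_2 u = f\chi_I = 0$, and by hypothesis $\p_1 u(x_1,0) = H_I f(x_1) = 0$ as well. Thus $u$ is a harmonic function in the upper half-plane whose full gradient vanishes on the open boundary segment $J \times \{0\}$. By the weak boundary unique continuation results cited (\cite{EA97}, \cite{AEK95}, \cite{KN98}), or equivalently by reflecting across $J \times \{0\}$ — since both Cauchy data vanish there, $u$ extends harmonically across $J$ by setting it equal to (a constant plus) its even/odd reflection, and the extension vanishes to infinite order on $J$ — $u$ must be constant on a neighborhood of $J$, hence (being harmonic and connected to the rest of $\R^2_+$) constant on all of $\R^2_+$. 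Then $\p_2 u \equiv 0$, so $f\chi_I = 0$, i.e. $f = 0$ in $L^2(I)$.

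**Part (b) by duality.** For the density statement I would argue by the Hahn--Banach theorem: it suffices to show that any $g \in L^2(J)$ that is $L^2(J)$-orthogonal to $\{H_I \va : \va \in C^\infty_0(I)\}$ must vanish. By Lemma~\ref{lem:adj}, for every $\va \in C^\infty_0(I)$ we have
\begin{align*}
0 = (H_I \va, g)_{L^2(J)} = -(\va, H_J g)_{L^2(I)},
\end{align*}
so $H_J g = 0$ as an element of $L^2(I)$ (since $C^\infty_0(I)$ is dense in $L^2(I)$). Now $H_J$ is exactly the truncated Hilbert transform associated with the interval $J$, and the pair $(J,I)$ again satisfies $\overline{J} \cap \overline{I} = \emptyset$. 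Hence part~(a), applied with the roles of $I$ and $J$ interchanged, gives $g = 0$ in $L^2(J)$. This proves that the closure of $\{H_I \va : \va \in C^\infty_0(I)\}$ is all of $L^2(J)$.

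**Main obstacle.** The only genuinely nontrivial input is the boundary unique continuation step in part~(a): concluding that a harmonic function on a half-plane whose Dirichlet and Neumann data both vanish on an open boundary arc must be (locally) constant. In two dimensions this is essentially elementary via Schwarz reflection followed by the identity theorem for the associated analytic function, but I would state it using the cited weak unique continuation results so that the argument transparently generalizes to the higher-dimensional Riesz-transform setting of the later sections, where reflection is no longer available and one genuinely needs the PDE-based unique continuation theorems. One should also take mild care that $u = N(f)$ has enough regularity near the boundary for the Cauchy data to make sense — but since $\supp f$ is a positive distance from $\overline{J}$, $u$ is smooth (indeed real-analytic) in a neighborhood of $J \times \{0\}$, so this is automatic, and Remark~\ref{rmk:unique} lets us ignore the additive constant ambiguity throughout.
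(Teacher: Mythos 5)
Your proposal is correct and follows essentially the same route as the paper: for (a) you pass to the harmonic extension $N(f)$, observe that both Cauchy data vanish on $J\times\{0\}$ (the paper phrases this by subtracting the constant boundary value $c$), and invoke weak boundary unique continuation to conclude $N(f)$ is constant, hence $f=0$; for (b) you use Hahn--Banach together with the adjoint relation of Lemma \ref{lem:adj} to reduce density to part (a) with the roles of $I$ and $J$ interchanged, exactly as in the paper. The only cosmetic difference is your mention of the Schwarz reflection alternative in two dimensions, which the paper does not spell out.
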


\begin{proof}
The statement of (a) follows from a combination of Lemma \ref{lem:Hilbert} and weak boundary unique continuation results for the Laplacian (or equivalently, weak unique continuation results for the half-Laplacian). Indeed, by Lemma \ref{lem:Hilbert} the associated Neumann harmonic extension $N(f)$ satisfies
\begin{align*}
\D (N(f)) &= 0 \mbox{ in } \R^2_+,\\
\p_2 (N(f)) & = 0 \mbox{ on } J \times \{0\},\\
N(f) & = c \mbox{ on } J \times \{0\},
\end{align*}
for some constant $c\in \R$.
Consequently the function $\tilde{f}:= N(f) - c$ solves
\begin{align*}
\D \tilde{f} &= 0 \mbox{ in } \R^2_+,\\
\p_2 \tilde{f} & = 0 \mbox{ on } J \times \{0\},\\
\tilde{f} & = 0  \mbox{ on } J \times \{0\}.
\end{align*}
By (boundary) weak unique continuation this however implies that $\tilde{f}=0$, from which we infer that $N(f) = c$ in $\R^2_+$. Since on $I \times \{0\}$ we consequently obtain that $\p_2 N(f)=0$, we have that
\begin{align*}
0= \p_2 N(f) =f \mbox{ on } I \times \{0\}.
\end{align*}
This entails that $c=f=0$.\\
The density property (b) follows by duality and a reduction to the unique continuation property (a). Indeed, by Hahn-Banach it suffices to show that if there existed $v\in L^2(J)$ with 
\begin{align}
\label{eq:HB}
(v,H_I g)_{L^2(J)}=0 \mbox{ for all } g\in C^{\infty}_0(I),
\end{align}
then $v=0$. By the characterization of the Hilbert space adjoint (c.f. Lemma \ref{lem:adj}), \eqref{eq:HB} 
however implies that 
\begin{align*}
( H_J v, g)_{L^2(I)} =(v,  H_I g)_{L^2(J)}= 0 \mbox{ for all } g \in 
C^{\infty}_0(I), 
\end{align*}
which in turn yields that $(H_J v)(x) = 0$ for all $x\in I$. By the result from (a) this allows us to conclude that $v=0$ as a function in $L^2(J)$. 
\end{proof}

\subsection{Propagation of smallness}
\label{sec:small}

In this section we derive a crucial propagation of smallness estimate, which forms the core of our argument. Technically, it is essentially a consequence of the ideas from \cite{ARV09} and relies on a combination of
\begin{itemize}
\item a three balls lemma,
\item elliptic regularity estimates,
\item appropriate trace theorems.
\end{itemize}
Before presenting this result, we introduce some notation and conventions related to the intervals $I,J$, in order to normalize the set-up. 

\begin{con}
\label{con:int}
Given two open intervals $I,J$ with $\overline{I}\cap \overline{J}=\emptyset$, we may without loss of generality assume that $J=(0,1)$. This follows from translation and rescaling. Moreover, we may suppose that $I=(a,b)$ for some $-\infty<a<b<0$. We abbreviate the remaining two parameters related to the intervals $I,J$ by
\begin{align*}
d_I:= \dist(I,J)=|b|, \ l_I:= b-a.
\end{align*}
For convenience of notation, we further define $h_1:= \min\{d_I/4,1/4\}$ and for an arbitrary open interval $\tilde{I}$ and $h>0$ we set
\begin{align*}
 \tilde{I}_h:= \{x\in \R: \dist(\tilde{I},x)<h\}.
\end{align*}
\end{con}

In the sequel, we will always assume that the setting has been normalized to the situation in Convention \ref{con:int}. Using this assumption, we formulate our main propagation of smallness result:

\begin{prop}
\label{prop:small}
Let $I,J$ be as in Convention \ref{con:int}.
Assume that $g \in L^2(J)$ and consider the function
 $u:=N(g):\R^2_+ \rightarrow \R$ which solves
\begin{align*}
\D u &= 0 \mbox{ in } \R^2_+,\\
\p_2 u & = \chi_J g \mbox{ on } \R \times \{0\}.
\end{align*}
Then there exist constants $\sigma >0$, $C>1$ (depending on the relative distance and the relative length $d_I,l_I$ of the two intervals $I,J$) such that for any $\epsilon>0$, $\delta \in (0,1)$ we have
\begin{equation}
\label{eq:quant_control}
\begin{split}
\|\p_2 u\|_{L^2(J \times \{\delta\})} 
&\leq \exp(C(|\ln(\epsilon)|+1)/\delta^{\sigma}) \|\p_1 u\|_{L^2(I \times \{0\})} + \frac{\epsilon}{2} \|g\|_{L^2(J)}\\
&= \exp(C(|\ln(\epsilon)|+2)/\delta^{\sigma}) \|H_J g\|_{L^2(I)} + \frac{\epsilon}{2} \|g\|_{L^2(J)}.
\end{split}
\end{equation}
\end{prop}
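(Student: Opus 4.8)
The strategy is to combine a quantitative three balls (propagation of smallness) estimate for harmonic functions with elliptic regularity and trace theorems, exploiting that on $J\times\{0\}$ the Neumann datum of $u=N(g)$ vanishes off $J$ while $\p_1 u|_{I\times\{0\}}=H_J g$ is the quantity we control. The key point is to propagate the smallness of $\p_1 u$ on $I\times\{0\}$ into the upper half-plane and then down to the line $J\times\{\delta\}$, at a cost that is explicit in $\delta$ and in the accuracy $\epsilon$.

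First I would fix a chain of balls (or discs) in $\overline{\R^2_+}$ starting near $I\times\{0\}$, staying a fixed distance above $\R\times\{0\}$ in the bulk, and terminating at a ball covering $J\times\{\delta\}$; the number of balls and their radii depend only on $d_I,l_I$ (this is where Convention \ref{con:int} is used to normalize). On each ball I apply the three balls inequality for the harmonic function $u$ (after subtracting a constant, using Remark \ref{rmk:unique}), which gives a logarithmic-convexity bound $\|u\|_{B_2}\le \|u\|_{B_1}^{\alpha}\|u\|_{B_3}^{1-\alpha}$ with $\alpha\in(0,1)$ depending only on the geometry. Iterating along the chain yields $\|u\|_{L^2(B(J,\delta))}\lesssim \|u\|_{L^2(\text{near }I)}^{\mu}\,\|g\|_{L^2(J)}^{1-\mu}$ for some $\mu=\mu(d_I,l_I,\delta)\in(0,1)$; the $\delta$-dependence of the exponent enters because the last ball must shrink like $\delta$ to fit under the line $J\times\{\delta\}$ without touching $\R\times\{0\}$, forcing a number of chain steps that grows polynomially in $1/\delta$, hence $\mu\sim c\delta^{\sigma}$. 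To convert the interior $L^2$ norm of $u$ near $I\times\{0\}$ into $\|\p_1 u\|_{L^2(I\times\{0\})}=\|H_J g\|_{L^2(I)}$, I use that $u$ is harmonic with zero Neumann data on a neighbourhood of $I$ in the boundary (since $\supp g\subset J$, disjoint from $\overline I$): by Caccioppoli/interior and boundary elliptic estimates, $\|u-c\|_{L^2(\text{near }I)}\lesssim \|\p_1 u\|_{L^2(I_{h_1}\times\{0\})}$ for a suitable constant $c$, and this last quantity is controlled by $\|\p_1 u\|_{L^2(I\times\{0\})}=\|H_J g\|$ up to propagating once more (three balls again) or by unique-continuation-type Caccioppoli on the reflected even extension across $J\times\{0\}$. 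The global bound $\|u\|_{L^2(B(J,\delta))}\lesssim \|g\|_{L^2(J)}$ follows from standard energy/Poisson-kernel estimates for $N(g)$ away from the support of $g$, or from the trace theorem plus elliptic estimates. Finally, $\|\p_2 u\|_{L^2(J\times\{\delta\})}$ is recovered from $\|u\|_{L^2(B(J,\delta))}$ by an interior elliptic estimate (derivatives of harmonic functions are controlled by local $L^2$ norms, with a constant scaling like $\delta^{-1}$, which is absorbed into the exponential).

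Putting these together gives $\|\p_2 u\|_{L^2(J\times\{\delta\})}\le C_\delta\,\|H_J g\|_{L^2(I)}^{\mu}\,\|g\|_{L^2(J)}^{1-\mu}$ with $\mu\asymp \delta^{\sigma}$ and $C_\delta$ of the form $e^{C/\delta^{\sigma}}$. The passage to the additive form \eqref{eq:quant_control} is the standard Young-type splitting: for any $\epsilon>0$ write $a^{\mu}b^{1-\mu}=\big(\epsilon^{-(1-\mu)/\mu}a\big)^{\mu}\big(\epsilon\, b\big)^{1-\mu}\le \mu\,\epsilon^{-(1-\mu)/\mu}a+(1-\mu)\epsilon b \le \epsilon^{-1/\mu}a+\epsilon b$, and since $1/\mu\lesssim \delta^{-\sigma}$ one has $\epsilon^{-1/\mu}\le \exp(C|\ln\epsilon|/\delta^{\sigma})$; absorbing the prefactor $C_\delta=e^{C/\delta^{\sigma}}$ and the $\tfrac12$ normalization yields exactly the stated estimate, the second line being just the identity $\|\p_1 u\|_{L^2(I\times\{0\})}=\|H_J g\|_{L^2(I)}$ from Lemma \ref{lem:Hilbert}. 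The main obstacle is bookkeeping the $\delta$-dependence of the exponent $\mu$ honestly: one must choose the ball chain so that both its length (number of iterations) and the radius of the terminal ball degrade only polynomially as $\delta\to 0$, and check that the elliptic/trace constants picked up at the endpoints do not introduce a worse-than-polynomial blow-up; this is where the hypotheses $\overline I\cap\overline J=\emptyset$ and the normalization of Convention \ref{con:int} are essential.
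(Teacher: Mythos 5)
There is a genuine gap at the crucial step where you pass from the boundary measurement to bulk control near $I$. You claim that, since $u$ has zero Neumann data near $\overline I$, ``Caccioppoli/interior and boundary elliptic estimates'' give $\|u-c\|_{L^2(\text{near }I)}\lesssim \|\p_1 u\|_{L^2(I_{h_1}\times\{0\})}$. No such unconditional estimate holds: bounding a harmonic function in an open neighbourhood by its Cauchy data on a boundary segment is a quantitatively ill-posed Cauchy/unique-continuation problem, not an elliptic regularity statement. The family $u_k(x_1,x_2)=\cos(kx_1)\cosh(kx_2)$ is harmonic, satisfies $\p_2 u_k=0$ on $\{x_2=0\}$ and $\|\p_1 u_k\|_{L^\infty(\{x_2=0\})}\le k$, yet at any fixed height $h>0$ it is of size $\cosh(kh)\sim e^{kh}$, so the ratio between the bulk norm and the boundary data is unbounded. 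What is true, and what the paper uses at exactly this point, is the \emph{conditional} (interpolation) form of the statement: the Lebeau--Robbiano boundary--bulk inequality $\|u\|_{H^1(I\times[1/2,1])}\le C\|u\|_{H^1(I_{h_1}\times[0,2])}^{\alpha}\bigl(\|u\|_{L^2(I\times\{0\})}+\|\p_1 u\|_{L^2(I\times\{0\})}\bigr)^{1-\alpha}$, which carries the a priori (global) factor with exponent $\alpha$; the global factor is then absorbed in Step 3 via $\|u\|_{H^1(\tilde K)}\lesssim \delta^{-1}\|g\|_{L^2(J)}$, and $\|u\|_{L^2(I\times\{0\})}$ is replaced by $\|\p_1 u\|_{L^2(I\times\{0\})}=\|H_Jg\|_{L^2(I)}$ by subtracting a constant and the fundamental theorem of calculus. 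Your fallback phrases (``propagating once more (three balls again)'', ``unique-continuation-type Caccioppoli on the reflected even extension'') do not repair this: the three balls inequality propagates smallness from an \emph{open} set, whereas here the smallness must be initiated from Cauchy data on a codimension-one boundary piece, which requires a boundary Carleman/interpolation estimate as a separate ingredient (and the reflection you invoke would in any case have to be across $(\R\setminus J)\times\{0\}$, not $J\times\{0\}$, where the Neumann datum is $g\neq 0$).

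A secondary but substantive issue is your bookkeeping of the chain: you assert that the number of balls grows polynomially in $1/\delta$ and conclude $\mu\sim c\,\delta^{\sigma}$. These are inconsistent: if $N\sim\delta^{-\kappa}$ then the iterated exponent is $\gamma^{N}\sim\exp(-c\,\delta^{-\kappa})$, which would produce a factor $\epsilon^{-1/\mu}=\exp\bigl(|\ln\epsilon|\,e^{c\delta^{-\kappa}}\bigr)$, far worse than the claimed $\exp\bigl(C(|\ln\epsilon|+1)/\delta^{\sigma}\bigr)$. To get $\mu\sim\delta^{\sigma}$ one must use a geometrically shrinking chain approaching $J\times\{\delta\}$, so that $N\sim C(d_I,l_I)(|\log\delta|+1)$ and $\gamma^{N}\sim\delta^{\sigma}$ with $\sigma=C|\ln\gamma|$, as in the paper. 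The remaining elements of your plan (applying three balls to $u$ rather than to $v=\p_2 u$, recovering $\p_2 u$ on $J\times\{\delta\}$ by interior estimates with a $\delta^{-1}$ loss, the Young-type splitting, and the identity $\p_1 u|_{x_2=0}=H_Jg$ on $I$) are fine and close to the paper's argument, but without the boundary interpolation ingredient and the corrected choice of $N$ the proof does not go through.
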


\begin{proof}
The proof relies on an elliptic propagation of smallness result (which is similar to the arguments from \cite{ARV09} but uses the interpolation estimate of Lebeau and Robbiano \cite{LR95}, in order to propagate the Neumann data). We argue in three steps:\\

\emph{Step 1: A three balls estimate.}
We recall the following interior three balls inequality for harmonic functions
\begin{align}
\label{eq:3sphere}
\|u\|_{L^2(B_{r}^+(x_0))}
 \leq C \|u\|_{L^2(B_{r/2}^+(x_0))}^{\gamma} \|u\|_{L^2(B_{2r}^+(x_0))}^{1-\gamma}.
\end{align}
In our set-up this holds for all points $x_0 \in \R^2_+$ and for all radii $r>0$ with the property that $\dist(x_0, J) \geq 4r$; $\gamma\in(0,1)$ is a universal constant.
We emphasize that in the setting of Proposition \ref{prop:small}, we only have to require a control on the distance to $J$ (and not to the whole set $\R \times \{0\}$), since in $(\R \setminus J)\times \{0\}$ the solution can be extended as a harmonic function into the lower half-plane by an even reflection. This permits us to use interior estimates in these regions as well.\\
Next, we consider a chain of balls, $\bigcup\limits_{j=1}^{N} B_{r_j}(x_j)$, which connects the sets $I \times [1/2,1]$ and $J\times [\delta/2,2\delta]$. We remark that it is possible to choose $N \sim C(d_I,l_J)(|\log(\delta)|+1)$.
Applying the estimate \eqref{eq:3sphere} to the function $v=\p_2 u$ (which for any $\delta>0$ is harmonic in the set $\R \times (\delta/2,\infty)$) and iterating the estimate along the chain of balls then results in
\begin{equation}
\label{eq:3balls1}
\|v\|_{L^2(J_{h_1}\times [\delta/2,2\delta])}
 \leq C^N \|v\|_{L^2(I\times [1/2,1])}^{\gamma^N} \|v\|_{L^2(K)}^{1-\gamma^N}.
\end{equation}
Here $K\subset \R^2_+$ denotes a slight fattening of the chain of balls, which has been used to propagate the smallness condition (we can for instance fatten by a factor $\min\{h_1,\delta/100\}$). In particular, it is possible to ensure that $\dist(K,J\times \{0\})\geq \delta/10$. Moreover, we recall that by Convention \ref{con:int} the set $J_{h_1}$ denotes a slight fattening of $J$. \\
By invoking Caccioppoli's estimate and trace bounds, \eqref{eq:3balls1} can further be upgraded to yield
\begin{equation}
\label{eq:3balls2}
\|v\|_{H^1(J_{h_1}\times [\delta/2,2\delta])} + \|v\|_{L^2(J_{h_1}\times \{\delta\})}
 \leq C^N \delta^{-1}\|v\|_{L^2(I\times [1/2,1])}^{\gamma^N} \|v\|_{L^2(K)}^{1-\gamma^N}.
\end{equation}

\emph{Step 2: Propagation of the boundary data.}
We recall the interpolation estimate of Lebeau and Robbiano \cite{LR95}, which allows us to propagate information from the boundary data. In our set-up we use it in the form
\begin{equation}
\label{eq:RL1}
\begin{split}
\|v\|_{L^2(I\times [1/2,1])} 
&\leq C \|u\|_{H^1(I\times [1/2,1])}\\
&\leq C \|u\|_{H^1(I_{h_1}\times [0,2])}^{\alpha}(\|u\|_{L^2(I \times \{0\})} + \|\p_1 u\|_{L^2(I \times \{0\})})^{1-\alpha}.
\end{split}
\end{equation}
Here $I_{h_1}$ is a slight fattening of $I$, which is chosen such that $I_{h_1} \cap J_{h_1} = \emptyset$, and $\alpha \in (0,1)$ is a universal constant.
Since $v$ is defined by differentiating $u$, we may, without loss of generality, subtract a constant from $u$ in such a way that there exists a point $\bar{x}_0 \in I$ with $u(\bar{x}_0)=0$. This permits us to bound the boundary $L^2$ norm on the right hand side of \eqref{eq:RL1} by the fundamental theorem and to infer a bound in terms of the Hilbert transform
\begin{equation}
\label{eq:RL2}
\begin{split}
\|v\|_{L^2(I\times [1/2,1])} 
&\leq C \|u\|_{H^1(I_{h_1}\times [0,2])}^{\alpha}\|H_J g\|_{L^2(I \times \{0\})}^{1-\alpha}.
\end{split}
\end{equation}
Combining this with the estimate from (\ref{eq:3balls2}) thus entails that
\begin{align}
\label{eq:3balls3}
\|v\|_{L^2(J_{h_1}\times \{\delta\})} + \|v\|_{H^1(J_{h_1}\times [\delta/2,2\delta])}
 \leq C^N \delta^{-1} \|H_J g\|_{L^2(I \times \{0\})}^{(1-\alpha)\gamma^N} \|u\|_{H^1(\tilde{K})}^{1-\gamma^N + \alpha \gamma^N}.
\end{align}
Here $\tilde{K}$ is a fattening of the set $K\cup (I_{h_1}\times [0,2])$.\\

\begin{figure}[t]
\centering
\includegraphics[width=0.7 \textwidth]{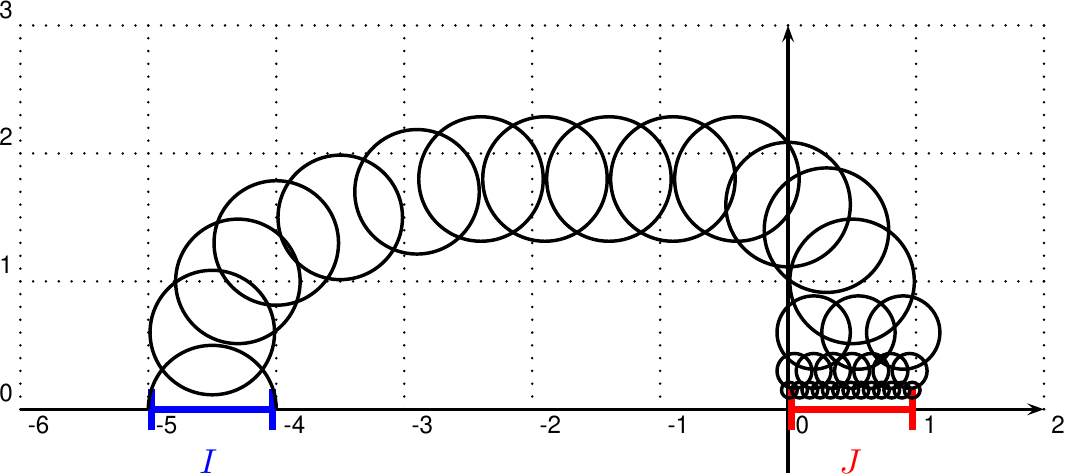}
\caption{Propagation of smallness through a chain of balls. The figure illustrates the chain of balls that is used in transferring information from the interval $I$ to the interval $J$. As we seek to infer information on $\|\p_2 u\|_{L^2(J \times \{0\})}$, we have to approach this part of the boundary from the interior of the upper half-plane. In particular, this enforces that the balls in the chain of balls argument decrease with their distance to $J\times \{0\}$. This accounts for the $\delta$-dependence on the number, $N>1$, of necessary balls, c.f. Remark \ref{rmk:dependence}.}
\end{figure}

\emph{Step 3: Conclusion.} In order to conclude our argument, we observe that 
\begin{align*}
\|u\|_{H^1(\tilde{K})} \leq \frac{C}{\delta}\|g\|_{L^2(J)},
\end{align*}
which follows from noting that
\begin{itemize}
\item the mapping $L^2(J) \ni g \mapsto u \in H^1_{loc}(\R^2_+)$ is smoothing away from $J$,
\item $\dist(\tilde{K},J)\geq \frac{\delta}{100}$,
\item and by using the expression for the fundamental solution in the upper half-plane.
\end{itemize}
Inserting this and the expression for $N \sim C(d_I,l_I)(|\log(\delta)|+1)$ into \eqref{eq:3balls3} and applying Young's inequality leads to
\begin{align*}
\|\p_2 u\|_{L^2(J \times \{\delta\})} 
& \leq C^N \delta^{-2} \|\p_1 u\|_{L^2(I \times \{0\})}^{(1-\alpha)\gamma^N} \|g\|_{L^2(J \times \{0\})}^{1-\gamma^N + \alpha \gamma^N}\\
 & \leq e^{C(l_I,d_I,\alpha)(1+|\ln(\epsilon)|)/ \delta^{\sigma}} \|H_J g\|_{L^2(I)} + 
 \frac{\epsilon}{2} \|g\|_{L^2(J)},
\end{align*}
where $\sigma = C|\ln(\gamma)| $.
This concludes the argument.
\end{proof}

\begin{rmk}[Dependence on $I,J$] 
\label{rmk:dependence}
We emphasize that in the argument from above, the dependence on the intervals $I,J$ enters only through the constant $N$, which is related to the choice of the path that is used in the chain of balls argument. Going through the argument of Proposition \ref{prop:small} carefully, we can provide more quantitative dependences on $d_I, l_I$: It is possible to choose $N$ such that
\begin{align*}
N \sim C (|\log(\delta)|+\max\{1,-\log(l_I)\} + d_I).
\end{align*}
In particular, $C(d_I,l_I)>1$ in \eqref{eq:quant_control} can be roughly estimated by $C(d_I,l_I)\leq \bar{C}^{N \gamma^{-N}}$, 
where $\bar{C}>1 $ is an absolute constant.
\end{rmk}

Next we observe that it is possible to replace the term $\|\p_2 u\|_{L^2(J\times \{\delta\})}$, which appears in the left hand side of \eqref{eq:quant_control}, by $\|\p_2 u\|_{L^2(J\times \{0\})}$ under suitable a priori knowledge (in terms of regularity) on $g$.

\begin{lem}
\label{lem:boundary}
Let $\delta \in (0,1)$ and let $J$ be as in Convention \ref{con:int}.
Assume that $g \in H^1(J)$ and consider the function $u:=N(g):\R^2_+ \rightarrow \R$ which solves 
\begin{align*}
\D u &= 0 \mbox{ in } \R^2_+,\\
\p_2 u & = \chi_J g \mbox{ on } \R \times \{0\}.
\end{align*}
Then there exists a constant $C>1$ such that
\begin{align*}
\|g\|_{L^2(J )} \leq C\left( \|\p_2 u\|_{L^2(J \times \{\delta\})} +\delta^{2/5}  \|g\|_{L^2(J )} + \delta^{2/5} \|\p_1 g\|_{L^2(J)}) \right).
\end{align*}
\end{lem}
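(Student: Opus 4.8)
The claim compares the boundary value $\|g\|_{L^2(J)} = \|\partial_2 u\|_{L^2(J\times\{0\})}$ with the slightly interior value $\|\partial_2 u\|_{L^2(J\times\{\delta\})}$, with an error that is quantitatively small in $\delta$ under the $H^1(J)$ a priori bound on $g$. The natural strategy is to write $v:=\partial_2 u$ and estimate the difference $v(x,0)-v(x,\delta)$ by the fundamental theorem of calculus along the vertical segment, namely $v(x,0)-v(x,\delta) = -\int_0^\delta \partial_2 v(x,t)\,dt$, so that
\begin{align*}
\|g\|_{L^2(J)} \leq \|\partial_2 u\|_{L^2(J\times\{\delta\})} + \left\| \int_0^\delta \partial_2 v(\cdot,t)\, dt \right\|_{L^2(J)}.
\end{align*}
The point is then to control the last term. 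Since $u$ is harmonic, $\partial_2 v = \partial_2^2 u = -\partial_1^2 u$ in $\R^2_+$, so away from $J\times\{0\}$ this is a harmless interior quantity; the subtlety is only near the boundary segment $J\times\{0\}$, where the Neumann datum $\chi_J g$ lives. First I would split $J$ (up to a negligible boundary layer of width $\sim\delta$ near $\partial J$, handled separately by the interior smoothing of $u$ near those two points) and reduce to the bulk of $J$, where one has a genuine local Neumann problem $-\Delta u = 0$ with $\partial_2 u = g$ on the flat piece of boundary.

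The heart of the matter is an elliptic boundary regularity/interpolation estimate: for the harmonic function $u$ with Neumann datum $g\in H^1$ on a flat boundary piece, one controls $\|\partial_2^2 u\|_{L^2(J\times(0,\delta))}$ (or the relevant averaged version appearing above) in terms of $\|g\|_{H^1(J)}$ together with global $L^2$ control on $u$ — which by the smoothing away from $J$ is itself $\lesssim \delta^{-1}\|g\|_{L^2(J)}$ or can simply be absorbed. Concretely, by Cauchy–Schwarz in the $t$-integral,
\begin{align*}
\left\| \int_0^\delta \partial_2 v(\cdot,t)\, dt \right\|_{L^2(J)}^2 \leq \delta \int_J \int_0^\delta |\partial_2 v(x,t)|^2\, dt\, dx = \delta\, \|\partial_2^2 u\|_{L^2(J\times(0,\delta))}^2,
\end{align*}
so it suffices to show $\|\partial_2^2 u\|_{L^2(J\times(0,\delta))} \lesssim \delta^{-1/10}(\|g\|_{L^2(J)}+\|\partial_1 g\|_{L^2(J)})$, since then the prefactor $\delta^{1/2}$ combines with $\delta^{-1/10}$ to give $\delta^{2/5}$. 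Such an estimate follows from standard elliptic estimates on the half-ball in the form $\|D^2 u\|_{L^2(B^+_{1/2})} \lesssim \|g\|_{H^{1/2}(\partial)} + \|u\|_{L^2(B^+_1)}$ after rescaling, combined with an interpolation inequality $\|g\|_{H^{1/2}} \lesssim \|g\|_{L^2}^{1/2}\|g\|_{H^1}^{1/2}$ and a careful tracking of how the scaling factors $\delta$ distribute — alternatively one can use a Caccioppoli-type iteration on dyadic strips $J\times(2^{-k-1}\delta, 2^{-k}\delta)$. Either route produces a power of $\delta$; the exponent $2/5$ is generous and need not be sharp.

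\textbf{Main obstacle.} The genuinely delicate point is the behavior near the two endpoints of $J$, where the Neumann datum jumps and $u$ is merely $H^1_{loc}$, so no $H^2$ control is available on a full neighborhood of $J\times\{0\}$. I would handle this by choosing the integration interval in the vertical FTC carefully: for $x$ within distance $\sim\delta$ of $\partial J$ one uses instead the interior regularity of $u$ at scale comparable to the distance to the two bad boundary points (where $u$ is smooth, being harmonic with zero Neumann data on that side, extendable by even reflection as in the proof of Proposition \ref{prop:small}), paying at worst a further power of $\delta$. The bookkeeping of constants — ensuring the $\delta^{2/5}$ comes out uniformly and that the coefficient of $\|g\|_{L^2(J)}$ on the right can genuinely be taken as an absorbable $C\delta^{2/5}$ rather than a fixed constant — is the part requiring the most care, but it is routine elliptic estimate bookkeeping rather than a conceptual difficulty.
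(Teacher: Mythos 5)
Your skeleton (vertical fundamental theorem of calculus for $v=\p_2 u$, harmonicity to trade $\p_{22}u$ for $\p_{11}u$, and a separate treatment of a layer near $\p J$) is indeed the paper's skeleton, but the quantitative heart of the lemma is exactly the step you delegate to ``standard elliptic estimates on the half-ball after rescaling,'' and that step does not go through as proposed. First, note that the whole content of the lemma is that the coefficient in front of $\|g\|_{L^2(J)}$ on the right-hand side is $o(1)$ as $\delta\to 0$ (otherwise the inequality is trivially true and cannot be absorbed in the proof of Proposition \ref{prop:HT}); your write-up never checks this. If you localize the $H^2$ Neumann estimate at scale $\sim\delta$, the rescaled lower-order term is $\delta^{-2}\inf_c\|u-c\|_{L^2(B^+_\delta)}\lesssim \delta^{-1}\|\nabla u\|_{L^2(B^+_\delta)}$, and near $J\times\{0\}$ the gradient is \emph{not} locally small: already for $g\equiv 1$ one has $\p_2 u\approx 1$ there, so each ball contributes $O(1)$, the covering gives only $\|\p_{22}u\|_{L^2(\mathrm{bulk}\times(0,\delta))}\lesssim \delta^{-1/2}\|g\|_{L^2(J)}$, and after your Cauchy--Schwarz factor $\delta^{1/2}$ you end up with $C\|g\|_{L^2(J)}$ with $C$ not small --- useless. (Your target bound $\|\p_{22}u\|_{L^2}\lesssim\delta^{-1/10}\|g\|_{H^1}$ is in fact true on the bulk region, but it is not reachable by the route you describe; moreover on the \emph{full} strip $J\times(0,\delta)$ it is false whenever $g(\p J)\neq 0$, since the jump of $\chi_J g$ produces a corner singularity $|D^2u|\sim 1/r$ which is not square integrable, so everything hinges on how and at which scale you excise the endpoints.)

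The paper resolves precisely this point by working at a \emph{mesoscale} $\delta^{\gamma}$ with $0<\gamma<2/3$ (eventually $\gamma=2/5$), not at scale $\delta$: the Neumann datum is split as $\eta g+(1-\eta)g$ with $\eta$ cutting off at distance $\sim\delta^{\gamma}$ from $\p J$; the near part is estimated through the Fourier/$\dot H^1$ bound $\|\p_{11}(G_N\ast_{x_1}(\eta g))(\cdot,y)\|_{L^2}\lesssim\|\p_1 g\|_{L^2}+\delta^{-\gamma}\|g\|_{L^2}$, which after integration in $y\in(0,\delta)$ yields the small coefficient $\delta^{1-\gamma}$ in front of $\|g\|_{L^2}$; the far (jump) part is estimated by the kernel decay $|\p_{11}G_N|\lesssim\delta^{-2\gamma}$ at distance $\geq c\,\delta^{\gamma}$, giving $\delta^{1-3\gamma/2}\|g\|_{L^2}$; and the excised layer of width $\delta^{\gamma}$ is controlled by a \emph{horizontal} fundamental theorem of calculus using $\p_1\p_2 u=\p_1 g$ on $J\times\{0\}$, at cost $\delta^{\gamma}\|\p_1 g\|_{L^2}$. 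Your choice of a layer of width $\sim\delta$ together with scale-$\delta$ localization is exactly the regime in which this gain is lost, and your proposed fix near the endpoints via ``even reflection'' is also not available there: on the $J$-side of the endpoints the Neumann datum is $g$, not zero, so there is no reflection, and what must be bounded is $\|g\|_{L^2(\mathrm{layer})}$ itself. In short, the overall architecture matches the paper, but the decisive estimate is asserted rather than proved, and the specific tools you invoke do not produce the required $o(1)$ factor in front of $\|g\|_{L^2(J)}$.
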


\begin{rmk}
\label{rmk:powers}
The exponent $2/5$ is clearly non-optimal. As, due to the chain of balls arguments from Proposition \ref{prop:small}, we however anyways do not expect to obtain optimal powers in our main estimates in Propositions \ref{prop:HT} and \ref{prop:HT2}, we do not optimize in the powers in this lemma.
\end{rmk}

\begin{proof}
In order to simplify notation, for $\gamma \in (0,1)$ and $\delta \in(0,1)$ we introduce
\begin{align*}
J_{\delta,\gamma}:= \{x\in J: \dist(x,\partial J)\geq \delta^{\gamma}\},\ 
J_{\delta,\gamma}^c:= J \setminus J_{\delta,\gamma}.
\end{align*}
With this notation and an application of the fundamental theorem, we have
\begin{equation}
\label{eq:fundth}
\begin{split}
\|\p_2 u\|_{L^2(J_{2\delta,\gamma}\times \{0\})}
&\leq \|\p_2 u\|_{L^2(J_{\delta,\gamma}\times \{\delta\})}
+ \int\limits_{0}^{\delta} \|\p_{22} u\|_{L^2(J_{2\delta,\gamma}\times \{y\})}dy,\\
\|\p_2 u\|_{L^2(J_{2\delta,\gamma}^c\times \{0\})}
&\leq \|\p_2 u\|_{L^2(J_{2\delta,\gamma}\times \{0\})}
+ \int\limits_{0}^{(2\delta)^{\gamma}} \|\p_{12} u\|_{L^2(J\times \{0\})}dx,
\end{split}
\end{equation}
where $\gamma\in(0,1)$ is a constant, which is still to be determined, and $\delta \in(0,1)$ is arbitrary. As $\p_1 \p_2 u = \p_1 g$ in $J\times \{0\}$, we directly infer that
\begin{align}
\label{eq:1der}
\int\limits_{0}^{(2\delta)^{\gamma}} \|\p_{12} u\|_{L^2(J\times \{0\})}dx \leq C \delta^{\gamma}\|\p_1 g\|_{L^2(J)}.
\end{align}
Hence, it suffices to bound $\int\limits_{0}^{\delta} \|\p_{22} u\|_{L^2(J_{2\delta,\gamma}\times \{y\})}dy$, in order to show the claim of the lemma. To this end, we observe that $\p_{11} u = -\p_{22} u$ in $J_{\delta,\gamma}\times (0,\delta]$, which is a consequence of harmonicity and up to the boundary regularity. Therefore,
\begin{align}
\label{eq:kernel}
\p_{22} u
&= - \p_{11} u
 = - \p_{11} (G_N \ast_{x_1} (\eta g)) - \p_{11} (G_N \ast_{x_1} ((1-\eta)g))
 =: f_1 + f_2.
\end{align}
Here $G_N(x) = \ln(|x|)$ is the Neumann Green's function and $\eta$ is a cut-off function which satisfies
\begin{align*}
\eta = 1 \mbox{ in } J_{3 \delta/2,\gamma} \mbox{ and } \eta = 0 \mbox{ in } J_{\delta, \gamma}.
\end{align*}
We estimate the two contributions in \eqref{eq:kernel} separately: Recalling the Fourier representation of the Neumann kernel, which was given in the proof of Lemma \ref{lem:Hilbert}, we observe that
\begin{align}
\label{eq:Fourier}
|\F(f_1)(\xi)|= \left| \xi^2 |\xi|^{-1} \F(\eta g) e^{-|\xi|y} \right|
\leq \left|  |\xi| \F(\eta g)(\xi)  \right|.
\end{align}
Treating $x_2$ as a dummy variable and only considering the integration in $x_1$ (for which we also write $L^2_{x_1}(\R)$ to clarify the relevant variable) therefore leads to
\begin{align*}
\|f_1\|_{L^2_{x_1}(J_{2\delta,\gamma}\times \{y\})}
&\leq \|f_1\|_{L^2_{x_1}(\R \times \{y\})}
\leq  \| \F(\eta g) \|_{\dot{H}^1(\R )}
=  \|\p_1 (\eta g) \|_{L^2(\R)} \\
&\leq C(\|\p_1 g\|_{L^2(J)} + \delta^{-\gamma}\|g\|_{L^2(J)}).
\end{align*}
Thus,
\begin{align}
\label{eq:f1}
\int\limits_{0}^{\delta}\|f_1\|_{L^2(J_{2\delta,\gamma}\times \{y\})}dy
\leq C (\delta \|\p_1 g\|_{L^2(J)} + \delta^{1-\gamma}\|g\|_{L^2(J)}).
\end{align}
For $f_2$ we note that with $x_1 \in J_{2\delta,\gamma}$ and $z\in J^{c}_{3\delta/2, \gamma} $ we have $|\p_{11} G_N(x_1-z,x_2)| \leq C \delta^{-2\gamma} $. Thus,
\begin{align*}
\|f_2\|_{L^2(J_{2\delta,\gamma})}
&\leq \|(\p_{11} G_N)\ast_{x_1} ((1-\eta)g)\|_{L^2(J_{2\delta,\gamma})} 
\leq C\delta^{-2\gamma} \|g\|_{L^1(J^c_{3\delta/2,\gamma})} \\
&\leq C\delta^{-2\gamma}  \delta^{\gamma/2}\|g\|_{L^2(J)}
\leq C\delta^{-3\gamma/2}  \|g\|_{L^2(J)}.
\end{align*}
As a consequence,
\begin{align}
\label{eq:f2}
\int\limits_{0}^{\delta}\|f_2\|_{L^2(J_{2\delta,\gamma}\times \{y\})}dy \leq C\delta^{1-3\gamma/2} \|g\|_{L^2(J)}.
\end{align}
Combining \eqref{eq:f1} and \eqref{eq:f2}, we infer that
\begin{align*}
 \int\limits_{0}^{\delta} \|\p_{22} u\|_{L^2(J_{2\delta,\gamma}\times \{y\})}dy \leq C\left((\delta^{1-3\gamma/2}  + \delta^{1-\gamma})\|g\|_{L^2(J)} + \delta \|\p_1 g\|_{L^2(J)}  \right).
\end{align*}
Together with \eqref{eq:fundth} and \eqref{eq:1der}, this therefore yields
\begin{align}
\label{eq:combi}
\|g\|_{L^2(J)} \leq  C\left((\delta^{1-3\gamma/2}  + \delta^{1-\gamma})\|g\|_{L^2(J)} + (\delta^{\gamma}+\delta) \|\p_1 g\|_{L^2(J)}  \right).
\end{align}
Setting $\gamma = \frac{2}{5}$ and inserting the resulting bounds into \eqref{eq:combi} proves the claimed estimate.
\end{proof}

\begin{rmk}
\label{rmk:replace}
Instead of arguing by relying on the Fourier transform as in \eqref{eq:Fourier}, we could also have used a direct kernel estimate in order to deduce \eqref{eq:f1}. We briefly outline the argument. To this end, let $\psi(x')$ be a smooth cut-off function which equals one on $[-1,1]$ and vanishes outside of $[-2,2]$. Then,
\begin{align*}
f_1 = (\psi \p_1 G) \ast_{x_1} \p_1 (\eta g) + ((1-\psi) \p_1 G) \ast_{x_1} \p_1(\eta g). 
\end{align*}
We estimate these terms by the applying Young's convolution estimate:
\begin{align}
\label{eq:Young}
\begin{split}
\|f_1\|_{L^1_{x_1}(J_{2\delta, \gamma} \times \{y\})}
&\leq \|(\psi \p_1 G) \ast_{x_1} \p_1 (\eta g)\|_{L^1_{x_1}(\R \times \{y\})} + \|((1-\psi) \p_1 G) \ast_{x_1} \p_1 (\eta g)\|_{L^1_{x_1}(\R \times \{y\})}\\
& \leq  \|(\psi \p_1 G)\|_{L^1_{x_1}(\R\times \{y\})}\|  \p_1 (\eta g)\|_{L^2(\R)} \\
& \quad + \|((1-\psi) \p_1 G)\|_{L^2_{x_1}(\R \times \{y\})} \| \p_1 (\eta g)\|_{L^1(\R)}.
\end{split}
\end{align}
For $\delta \in (0,1)$ we integrate in $y\in [0,\delta]$ and estimate the resulting two contributions separately: On the one hand we have
\begin{align}
\label{eq:G1}
\|\psi \p_1 G\|_{L^1(\R\times [0,\delta])}
\leq \left\|\frac{1}{|(x_1,y)|}\right\|_{L^1([-1,1]\times [0,\delta])}
\leq C\delta(1+\log(\delta)).
\end{align}
On the other hand,
\begin{align}
\label{eq:G2}
\|(1-\psi) \p_1 G\|_{L^2(\R \times [0,\delta])}
\leq \left\|\frac{1}{|(x_1,y)|}\right\|_{L^2((\R\setminus [-1,1])\times [0,\delta])}
\leq C \delta.
\end{align}
Combining \eqref{eq:G1}, \eqref{eq:G2} and \eqref{eq:Young} with the compact support of $\eta g$ and Hölder's inequality (to pass from an $L^1$ to an $L^2$ estimate for $\p_1 (\eta g)$) therefore again yields \eqref{eq:f1} (up to a logarithmic loss).
\end{rmk}

\subsection{Applications: Almost invertibility and approximation}
\label{sec:applications}

In this section we exploit the propagation of smallness estimates from the previous section to prove our main results on the truncated Hilbert transform, Propositions \ref{prop:HT} and \ref{prop:HT2}. The section is divided into two parts: In the first part, we deduce the conditional invertibility estimates, in the second, we prove appropriate approximation results.

\subsubsection{Almost invertibility and the proof of Proposition \ref{prop:HT}}
\label{sec:invert}

By combining the estimates from Proposition \ref{prop:small} and Lemma \ref{lem:boundary} from Section \ref{sec:small}, we provide the proof of Proposition \ref{prop:HT}.

\begin{proof}[Proof of Proposition \ref{prop:HT}]
Without loss of generality, we assume that the normalization conditions from Convention \ref{con:int} hold.
With these, the proof of Proposition \ref{prop:HT} is a direct consequence of Proposition \ref{prop:small} and Lemma \ref{lem:boundary}. Indeed, choosing $\epsilon = \delta$ we have the two bounds
\begin{equation}
\label{eq:applications}
\begin{split}
\|\p_2 u\|_{L^2(J \times \{\delta\})}
&\leq  \exp(C(l_I,d_I)(1+\ln(\delta))/\delta^{\sigma}) \|H_J g\|_{L^2(I)} + \frac{\delta}{2}\|g\|_{L^2(J)},\\
\|g\|_{L^2(J )}& \leq C\left( \|\p_2 u\|_{L^2(J \times \{\delta\})} +\delta^{2/5}  \|g\|_{L^2(J )} + \delta^{2/5} \|\p_1 g\|_{L^2(J)})\right).
\end{split}
\end{equation}
Setting
\begin{align*}
\delta : = \min\left\{ \left(\frac{\|g\|_{L^2(J)}}{10(C+1)\|\p_1 g\|_{L^2(J)}}\right)^{5/2}, \frac{1}{100}  \right\},
\end{align*}
and combining the two estimates from \eqref{eq:applications} yields
\begin{align*}
\|\p_1 g\|_{L^2(J)} +\|g\|_{L^2(J)} &\leq  \exp\left( C(d_I, l_I)\left(1  +   \frac{\|\p_1 g\|_{L^2(J)}^{\tilde{\sigma}}}{\|g\|_{L^2(J)}^{\tilde{\sigma}}} \right) \right) \|H_J g\|_{L^2(I)}+ \frac{1}{5}\|g\|_{L^2(J)},
\end{align*}
where $\tilde{\sigma}$ is an arbitrary constant which is strictly larger than $ \sigma$ (in order to absorb the factor $\log(\delta)$).
Absorbing the last term on the right hand side into the left hand side then concludes the proof of Proposition \ref{prop:HT}.
\end{proof}

\begin{rmk}
\label{rmk:comment}
As pointed out in the introduction, compared to the bounds in \cite{APS13}, the result of Proposition \ref{prop:HT} does not have the optimal dependence in the exponential in terms of the power $\tilde{\sigma}$. However, relying only on propagation of smallness estimates, this method of proof is very robust and \emph{does not} require any a priori knowledge of the explicit singular value asymptotics.\\
The estimate \eqref{eq:almost_invert1} can also be read as a \emph{quantitative} unique continuation result for the truncated Hilbert transform. It thus refines the qualitative result from Lemma \ref{cor:inj_dens} (a).
\end{rmk}

\subsubsection{Approximation}

In this section we present the argument for the approximation result from Proposition \ref{prop:HT2}.
It can be viewed as a refinement of the density result from Corollary \ref{cor:inj_dens}, in which we also estimate the \emph{cost} of approximating a given function.\\
In order to construct a suitable control function (within the admissible error threshold for the approximation), we rely on quantitative variational techniques similar as in \cite{FZ00}. To this end,
for each given $h\in H^1(J)$ and $\epsilon>0$, we consider the functional 
\begin{equation}
\label{eq:Ge}
\begin{split}
&\G_{\epsilon}:L^2(J) \rightarrow [-\infty,\infty],\\
&\G_{\epsilon}(g) := \frac{1}{2}\int\limits_{I}|H_J g|^2 dx + \epsilon \|g \|_{L^2(J)} - \int\limits_{J}h g dx.
\end{split}
\end{equation}
In the sequel, we will show that for each function $h\in L^2(J)$ and each error threshold $\epsilon>0$ the functional $\G_{\epsilon}$ has a unique minimizer $\bar{g}$ (c.f. Lemma \ref{lem:ex}). Further, we will then set $f:= -\chi_I H_J \bar{g} \in L^2(I)$ and show that $f$ satisfies the properties claimed in Proposition \ref{prop:HT2} (c.f. Lemma \ref{lem:ex} and the estimates in the following proof of Proposition \ref{prop:HT2}). Again, the quantitative propagation of smallness estimates from Section \ref{sec:small} constitute a key building block in these arguments.\\

We begin by analysing the functional from \eqref{eq:Ge}. The existence of minimizers and their approximation properties rely on coercivity properties of the functional, which are a consequence of the \emph{qualitative} unique continuation properties of the Hilbert transform (c.f. Lemma \ref{cor:inj_dens} (a)). For an estimate on the cost of approximation, we will need more \emph{quantitative} control.\\

Before proving the existence of minimizers to \eqref{eq:Ge}, we discuss a slight simplification of our problem: We observe that without loss of generality we may assume that the function $h$ in Proposition \ref{prop:HT2} is an element of $H^1_0(J)$.
Indeed, if this is not the case, using the notation from Convention \ref{con:int}, we can extend the function $h$ to a function $\tilde{h}$ having compact support in a slightly larger interval $J_{h_1}$ such that 
\begin{align}
\label{eq:compact}
\|\tilde{h}\|_{H^1_0(J_{h_1})}\leq C(d_I) \|h\|_{H^1(J)}, \quad
\|\tilde{h}\|_{L^2(J_{h_1})}\leq C(d_I) \|h\|_{L^2(J)}.
\end{align}
In particular, the fattening factor $h_1$ (which was defined in Convention \ref{con:int}) is chosen such that $J_{h_1}$ and $I$ are still disjoint intervals (c.f. Convention \ref{con:int}), whose distance is comparable to the distance of the original intervals $I,J$. If we can show the approximation property for the two intervals $I,J_{h_1}$, we also infer the approximation property of \eqref{eq:approx} by restricting to the intervals $J,I$. Thus, in the sequel, we will without further comment assume that $h\in H^{1}_0(J)$.\\

With these preliminary considerations, we turn to the existence of minimizers of \eqref{eq:Ge}:

\begin{lem}
\label{lem:ex}
Let $I,J$ be open, bounded intervals such that $\overline{I}\cap \overline{J} = \emptyset$.
Let $\epsilon>0$ and let $\G_{\epsilon}:L^2(J) \rightarrow [-\infty,\infty]$ be as in \eqref{eq:Ge}.
Then there exists a unique minimizer $\bar{g}\in L^2(J)$ of the functional $\G_{\epsilon}$. Moreover, the function $f:=-\chi_I H_J(\bar{g})$ satisfies
\begin{align}
\label{eq:small}
\|H_I f - h\|_{L^2(J)}\leq \epsilon.
\end{align}
\end{lem}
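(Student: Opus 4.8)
The plan is to establish existence and uniqueness of the minimizer of $\G_\epsilon$ by the direct method of the calculus of variations, then to derive \eqref{eq:small} as an Euler--Lagrange-type consequence of minimality. First I would verify the structural properties of $\G_\epsilon$: it is convex (the quadratic term $\frac12\|H_J g\|_{L^2(I)}^2$ is convex, the norm term $\epsilon\|g\|_{L^2(J)}$ is convex, and the linear term $-\int_J h g$ is affine), and it is lower semicontinuous with respect to weak $L^2(J)$ convergence (the norm terms are weakly lower semicontinuous, the linear term is weakly continuous, and $g \mapsto H_J g$ is bounded $L^2(J) \to L^2(I)$, so weak convergence is preserved under it). Strict convexity — hence uniqueness of the minimizer — is where the qualitative unique continuation of Lemma \ref{cor:inj_dens}(a) enters: if $\G_\epsilon$ failed to be strictly convex along some segment $g_0 + t(g_1-g_0)$, the quadratic term would have to be affine there, forcing $H_J(g_1-g_0) = 0$ on $I$, hence $g_1 = g_0$ by injectivity; combined with strict convexity of $t \mapsto \epsilon\|g_0 + t(g_1-g_0)\|$ away from degenerate cases this pins down uniqueness.

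Next I would prove coercivity, which is the genuine obstacle in this lemma. The quadratic term $\frac12\|H_J g\|_{L^2(I)}^2$ does not control $\|g\|_{L^2(J)}$ (the truncated Hilbert transform is not bounded below — that is the whole point of the paper), so coercivity of $\G_\epsilon$ is not immediate from the individual terms. The argument I expect to work: along any sequence $g_k$ with $\|g_k\|_{L^2(J)} \to \infty$, write $\hat g_k := g_k/\|g_k\|_{L^2(J)}$, so $\|\hat g_k\|_{L^2(J)} = 1$. If $\|H_J g_k\|_{L^2(I)}/\|g_k\|_{L^2(J)}$ stays bounded away from zero, then the quadratic term grows like $\|g_k\|^2$ and dominates the linear term, giving $\G_\epsilon(g_k) \to +\infty$. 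Otherwise, along a subsequence $\|H_J \hat g_k\|_{L^2(I)} \to 0$; extracting a further weakly convergent subsequence $\hat g_k \rightharpoonup \hat g$ in $L^2(J)$, weak lower semicontinuity gives $H_J \hat g = 0$ on $I$, hence $\hat g = 0$ by Lemma \ref{cor:inj_dens}(a); but then the term $\epsilon\|g_k\|_{L^2(J)} = \epsilon\|g_k\|_{L^2(J)}$ together with $-\int_J h g_k = -\|g_k\|_{L^2(J)} \int_J h \hat g_k$ and $\int_J h\hat g_k \to \int_J h\hat g = 0$ shows $\G_\epsilon(g_k) \geq (\epsilon - o(1))\|g_k\|_{L^2(J)} - o(\|g_k\|_{L^2(J)}) \to +\infty$. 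Hence $\G_\epsilon$ is coercive, and the direct method yields a (unique) minimizer $\bar g \in L^2(J)$.

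Finally I would derive the approximation estimate \eqref{eq:small}. Since $\bar g$ minimizes $\G_\epsilon$, for every $\phi \in L^2(J)$ and every $t \in \R$ we have $\G_\epsilon(\bar g + t\phi) \geq \G_\epsilon(\bar g)$. Expanding the quadratic term and using $\|\bar g + t\phi\|_{L^2(J)} \leq \|\bar g\|_{L^2(J)} + |t|\,\|\phi\|_{L^2(J)}$ to bound the non-smooth norm term from above, dividing by $t > 0$ and letting $t \downarrow 0$ (and separately $t \uparrow 0$) gives the variational inequality
\begin{align}
\label{eq:EL}
\left| (H_J \bar g, H_J \phi)_{L^2(I)} - \int_J h\phi\, dx \right| \leq \epsilon \|\phi\|_{L^2(J)} \quad \text{for all } \phi \in L^2(J).
\end{align}
Now I use the adjoint relation from Lemma \ref{lem:adj}: with $f = -\chi_I H_J \bar g$, one has $(H_J \bar g, H_J \phi)_{L^2(I)} = -(H_I f, \phi)_{L^2(J)}$... more carefully, $(H_I f, \phi)_{L^2(J)} = -(H_J\phi, f)_{L^2(I)} = -(H_J\phi, -H_J\bar g)_{L^2(I)} = (H_J \phi, H_J\bar g)_{L^2(I)}$, so \eqref{eq:EL} reads $|(H_I f - h, \phi)_{L^2(J)}| \leq \epsilon\|\phi\|_{L^2(J)}$ for all $\phi$. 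Taking the supremum over $\|\phi\|_{L^2(J)} \leq 1$, or simply choosing $\phi = H_I f - h \in L^2(J)$, yields $\|H_I f - h\|_{L^2(J)} \leq \epsilon$, which is \eqref{eq:small}. The main obstacle is the coercivity step, since it is exactly the place where the non-invertibility of the operator must be circumvented using only the qualitative injectivity together with the regularizing effect of the $\epsilon\|g\|_{L^2(J)}$ term; the existence, uniqueness, and Euler--Lagrange parts are then routine.
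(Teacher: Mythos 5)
Your proposal is correct and follows essentially the same route as the paper: the direct method with coercivity proved via the normalized-sequence dichotomy and the qualitative injectivity of Lemma \ref{cor:inj_dens}(a), followed by the one-sided first-variation inequality and the adjoint relation of Lemma \ref{lem:adj} to obtain \eqref{eq:small} by duality. Your explicit strict-convexity argument for uniqueness (injectivity of $g\mapsto \chi_I H_J g$ makes the quadratic term strictly convex) is a point the paper leaves implicit, and it is correct as stated.
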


\begin{proof}
We argue in two steps and first prove the existence of minimizers. In a second step, we then deduce the approximation property \eqref{eq:small} associated with $f$. \\

\emph{Step 1: Existence of a minimizer.}
Since for each parameter $\epsilon>0$ and for each function $h\in L^2(J)$ the functional $\G_{\epsilon}$ is  convex and lower semicontinuous with respect to weak convergence on $L^2(J)$, it suffices to prove its coercivity, i.e. the property that
\begin{align*}
\G_{\epsilon}(g) \rightarrow \infty \mbox{ as } \|g\|_{L^2(J)} \rightarrow \infty.
\end{align*}
We claim that this can be reduced to a unique continuation result for the Hilbert transform. Indeed, let $\{g_j\} \subset L^2(J)$ be an (arbitrary) sequence with the property that $\|g_j\|_{L^2(J)} \rightarrow \infty$. Setting $\tilde{g}_j:= \frac{g_j}{\|g_j\|_{L^2(J)}}$, we observe that $\|\tilde{g}_j\|_{L^2(J)}=1$ and that
\begin{align}
\label{eq:normalized}
\frac{\G_{\epsilon}(g_j)}{\|g_j\|_{L^2(J)}} 
= \frac{\|g_j\|_{L^2(J)}}{2} \|H_J \tilde{g}_j\|_{L^2(I)}^2 + \epsilon - \int\limits_{J} h \tilde{g}_j dx.
 \end{align}
 We now distinguish two cases:
 \begin{itemize}
 \item[(a)] If $\liminf\limits_{j\rightarrow \infty} \|H_J \tilde{g}_j\|_{L^2(I)}>0$, equation \eqref{eq:normalized} and the normalization of the functions $\tilde{g}_j$ directly imply that
 \begin{align*}
 \frac{\G_{\epsilon}(g_j)}{\|g_j\|_{L^2(J)}}  \rightarrow \infty \mbox{ as } j \rightarrow \infty.
 \end{align*}
 This then proves the claimed coercivity.
\item[(b)] In the case that
$\liminf\limits_{j\rightarrow \infty} \|H_J \tilde{g}_j\|_{L^2(I)}=0$, we use the normalization of $\tilde{g}_j$ and the characterization of the adjoint Hilbert transform to infer that
\begin{align*}
\tilde{g}_j &\rightharpoonup \tilde{g} \mbox{ in } L^2(J),\\
H_J \tilde{g}_j &\rightharpoonup H_J \tilde{g} \mbox{ in } L^2(I).
\end{align*}
By lower semi-continuity of the $L^2$ norm, $\|H_J \tilde{g}\|_{L^2(I)} = 0$. Lemma \ref{cor:inj_dens} (a) then enforces that $\tilde{g}=0$ as a function in $L^2(J)$. In particular, we infer that
\begin{align*}
\int\limits_J h \tilde{g}_j dx \rightarrow 0 \mbox{ as } j \rightarrow \infty.
\end{align*}
Thus, choosing $j\in \N$ sufficiently large in equation \eqref{eq:normalized}, implies that $ \frac{\G_{\epsilon}(g_j)}{\|g_j\|_{L^2(J)}} \geq \epsilon/2$, which concludes the coercivity proof.
 \end{itemize}
\emph{Step 2: Proof of \eqref{eq:small}.} Let $\bar{g}$ be the minimizer of $\G_{\epsilon}$. Hence, for all $\mu \in \R$ and all $g\in L^2(J)$ it holds, $\G_{\epsilon}(\bar{g})\leq \G_{\epsilon}(\bar{g} + \mu g)$. Spelling this out and applying the triangle inequality yields
\begin{align*}
0&\leq \epsilon(\|\bar{g} + \mu g\|_{L^2(J)} - \|\bar{g}\|_{L^2(J)}) + 
\frac{\mu^2}{2}\|H_J g\|_{L^2(J)}^2  + \mu \left( \int\limits_I H_J \bar{g} H_J g dx - \int\limits_J hg dx \right)\\
&\leq \epsilon \mu \|g\|_{L^2(J)}+ 
\frac{\mu^2}{2}\|H_J g\|_{L^2(J)}^2  + \mu \left( \int\limits_I H_J \bar{g} H_J g dx - \int\limits_J hg dx \right).
\end{align*}
Dividing by $\mu$ and considering the limit $\mu \rightarrow 0_+$ implies
\begin{align*}
0 \leq \epsilon \|g\|_{L^2(J)} + \int\limits_{I} H_J \bar{g} H_J g dx - \int\limits_J hg dx.
\end{align*}
Combining this with the analogous limit $\mu \rightarrow 0_-$ thus result in
\begin{align}
\label{eq:ELG}
\left|\int\limits_{I} H_J \bar{g} H_J g dx - \int\limits_J hg dx \right| \leq  \epsilon \|g\|_{L^2(J)}  \mbox{ for all } g\in L^2(J).
\end{align}
Defining $f:= -\chi_I H_J \bar{g}$ and using that by Lemma \ref{lem:adj} we have that $(f,H_Jg)_{L^2(I)} =(g,H_If)_{L^2(J)}$, therefore entails that
\begin{align*}
\left|\int\limits_{J} (H_I f - h)g dx \right| \leq  \epsilon \|g\|_{L^2(J)}  \mbox{ for all } g\in L^2(J).
\end{align*}
By duality this concludes the proof of \eqref{eq:small}.
\end{proof}

\begin{rmk}
\label{rmk:ELG}
The Euler-Lagrange equations of the variational problem from \eqref{eq:Ge} and the arguments from the preceding proof, which lead to \eqref{eq:ELG}, imply that 
\begin{align*}
\min\limits_{g \in L^2(J)} \G_{\epsilon}(g) = -\frac{1}{2} \|f\|_{L^2(I)}^2.
\end{align*}
\end{rmk}

In order to conclude the proof of the approximation result from Proposition \ref{prop:HT2}, it remains to estimate the \emph{cost} of approximation, i.e. the size of $\|f\|_{L^2(I)}$.
To this end, we exploit improved coercivity properties, which rely on \emph{quantitative} unique continuation results in the form of the propagation of smallness result from Proposition \ref{prop:small}. With these estimates at hand, we argue along the  lines of the controllability proofs in \cite{FZ00}.

\begin{proof}[Proof of Proposition \ref{prop:HT2}]
Without loss of generality, we may assume that the normalization conditions from Convention \ref{con:int} hold.
We consider the functional $\G_{\epsilon}$ and rewrite it as
\begin{align*}
\G_{\epsilon}(g) = \tilde{\G}_{\epsilon,\delta}(g) + \frac{\epsilon}{2} \|g\|_{L^2(J)}
- \int\limits_{J} h (\p_2 v(x,0)-\p_2 v(x,\delta)) dx,
\end{align*}
where 
\begin{align*}
\tilde{\G}_{\epsilon,\delta}(g):= \frac{1}{2} \int\limits_{I} |H_J g|^2 dx - 
\int\limits_{J} h \p_2 v(x,\delta) dx + \frac{\epsilon}{2} \|g\|_{L^2(J)},
\end{align*}
and where $v=N(g)$ (c.f. Lemma \ref{lem:Hilbert}) is a solution to
\begin{align*}
\D v& = 0 \mbox{ in } \R^2_+,\\
\p_2 v & =  \chi_J g \mbox{ on } \R \times \{0\}.
\end{align*}
Here we require that $\delta>0$ is chosen in such a way that for all $g\in L^2(J)$ and for fixed $\epsilon>0$
\begin{align}
\label{eq:delta}
\frac{\epsilon}{2} \|g\|_{L^2(J)} - \int\limits_{J} h (\p_2 v(x,0)- \p_2v(x,\delta)) dx \geq 0 .
\end{align}
Hence,
\begin{align*}
I_1 := \min\limits_{g\in L^2(J)} \G_{\epsilon}(g) \geq \inf\limits_{g\in L^2(J)} \tilde{\G}_{\epsilon,\delta}(g)=:I_2.
\end{align*}
Using the Euler-Lagrange equations for the functional $\G_{\epsilon}$ (c.f. Remark \ref{rmk:ELG}), we infer that
\begin{align*}
I_1 = - \frac{1}{2} \int\limits_{I} |H_J \bar{g}|^2 dx, 
\end{align*}
where $\bar{g}$ denotes the solution to the minimization problem for $\G_{\epsilon}$. Defining $f:= -\chi_I H_J \bar{g}$, thus implies
\begin{align*}
\int\limits_{I}|f|^2 dx \leq -2 I_2.
\end{align*}
Therefore it suffices to estimate $I_2$ and to ensure that \eqref{eq:delta} holds true. Invoking Proposition \ref{prop:small} with $\tilde{\epsilon}:= \epsilon \|h\|_{L^2(J)}^{-1}$ and using Young's inequality leads to 
\begin{align*}
I_2 &= \inf\limits_{g \in L^2(J)} \frac{1}{2}\int\limits_{I}|H_J g|^2 dx 
- \int\limits_{J} h \p_2 v(x,\delta) dx +\frac{\epsilon}{2} \|g\|_{L^2(J)}  \\
& \geq \inf\limits_{g\in L^2(J)}  \frac{1}{2}\int\limits_{I}|H_J g|^2 dx  - \exp(C(d_I, l_I)(1+|\ln(\tilde{\epsilon})|)/\delta^{\sigma})\|H_J g\|_{L^2(I )} \|h\|_{L^2(J)}\\
& \quad  -\frac{\tilde{\epsilon}}{2}\|h\|_{L^2(J)}\|g\|_{L^2(J)} + \frac{\epsilon}{2}\|g\|_{L^2(J)}\\
&\geq - (\exp(C(d_I, l_I)(1+|\ln(\epsilon \|h\|_{L^2(J)}^{-1})|/\delta^{\sigma})+1) \|h\|_{L^2(J)}^2.
\end{align*}
Therefore,
\begin{align}
\label{eq:cost}
\|f\|_{L^2(I)}^2 \leq (\exp(C(d_I, l_I)(1+|\ln(\epsilon \|h\|_{L^2(J)}^{-1})|/\delta^{\sigma})+1)\|h\|_{L^2(J)}^2.
\end{align}
In order to conclude the proof of the approximation result, it thus suffices to derive a relation between $\epsilon$ and $\delta$. This is obtained as a consequence of the requirement \eqref{eq:delta}. To observe this, we note that
\begin{align*}
&\left|\int\limits_{J} h(\p_2 v(x,0) - \p_2 v(x,\delta))|^2 dx \right|
\leq \|h\|_{H^1_0(J)} \|\p_2v(\cdot, 0) - \p_2 v(\cdot, \delta)\|_{\dot{H}^{-1}(J)}\\
& \leq \|h\|_{H^1_0(J)} \int\limits_{0}^{\delta} \|\p_{22}v(\cdot, y) \|_{\dot{H}^{-1}(J)} dy
 = \|h\|_{H^1_0(J)} \int\limits_{0}^{\delta} \|\p_{11}v(\cdot, y) \|_{\dot{H}^{-1}(J)} dy\\
& \leq \|h\|_{H^1_0(J)} \int\limits_{0}^{\delta} \|\p_{1}v(\cdot, y) \|_{L^{2}(J)} dy
 \leq C \delta \|h\|_{H^1_0(J)}\|g\|_{L^2(J)},
\end{align*}
where we used that
\begin{align*}
\|\p_{1}v(\cdot, y) \|_{L^{2}(J)} 
\leq \|\p_{1}v(\cdot, y) \|_{L^{2}(\R^n)} 
= \|\sgn(\xi) e^{-|\xi|y} \F(g)\|_{L^{2}(\R^n)} 
\leq \|g \|_{L^{2}(J)}.
\end{align*}
Combining this estimate with the requirement \eqref{eq:delta} yields the necessary condition
\begin{align*}
\delta \leq \frac{\epsilon}{2C\|h\|_{H^1_0(J)}}.
\end{align*}
Setting $\delta := \frac{\epsilon}{2C\|h\|_{H^1(J)}}$ and inserting this into the bound \eqref{eq:cost} therefore results in
\begin{align*}
\|f\|_{L^2(I)} \leq  \exp(C(d_I,l_I)(1 +\|h\|_{H^1(J)}^{\tilde{\sigma}})/\epsilon^{\tilde{\sigma}})\|h\|_{L^2(I)},
\end{align*}
where $\tilde{\sigma}$ is an arbitrary constant with $\tilde{\sigma}> \sigma$ and where we have used that $h \in H^1_0(J)$ (c.f. the discussion before Lemma \ref{lem:ex}). This implies the estimate for the cost of controllability.
The estimate for the approximation quality of $f$, i.e. the estimate for $\|H_I f-h\|_{L^2(J)}$, is a direct consequence of \eqref{eq:small}.
\end{proof}

\section{Extension to the Higher Dimensional Situation}
\label{sec:nD1}

In this section we extend the invertibility and approximation results for the Hilbert transform to Riesz transforms, which can be viewed as $n$-dimensional analoga of the Hilbert transform. Since the results on and the arguments for these $n$-dimensional operators are analogous to the ones for the Hilbert transform, we only present short sketches of the proofs and point out their key ingredients.\\

We recall that in $\R^{n}$ the $j$-th Riesz transform $R_j$ is defined by $\F(R_j f)(\xi) = \frac{\xi_j}{|\xi|} \F (f)(\xi)$. As in the case of the Hilbert transform, we note that it is possible to realize this operator by considering an associated harmonic extension into $\R^{n+1}_+$:

\begin{lem}
\label{lem:Riesz_trafo}
Let $f\in L^2(\R^n)$ and denote its Neumann harmonic extension by $N(f):= G_N \ast_{x'} f$, where $x=(x', x_{n+1})$ and $G_N(x)=|x|^{1-n}$ denotes the Neumann Green's function in $\R^{n+1}_+$. Then $R_j f(x') = \p_j N(f)(x', 0)$.
\end{lem}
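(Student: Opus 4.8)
\emph{Proof proposal.} The plan is to follow verbatim the Fourier-side argument used for Lemma~\ref{lem:Hilbert}, now carried out in $\R^{n+1}_+$ rather than $\R^2_+$. First I would compute the partial Fourier transform in the tangential variables $x'$ of the Neumann Green's function $G_N(x',x_{n+1}) = (|x'|^2 + x_{n+1}^2)^{(1-n)/2}$. The cleanest route is to recall that the $n$-dimensional (inverse) Fourier transform of $\xi\mapsto e^{-x_{n+1}|\xi|}$ is, up to a dimensional constant, the Poisson kernel $x_{n+1}(|x'|^2 + x_{n+1}^2)^{-(n+1)/2}$ of $\R^{n+1}_+$, and then to integrate this identity in $x_{n+1}$: since $\int_{x_{n+1}}^{\infty} e^{-t|\xi|}\,dt = |\xi|^{-1} e^{-x_{n+1}|\xi|}$ and $\int_{x_{n+1}}^{\infty} t\,(|x'|^2 + t^2)^{-(n+1)/2}\,dt = (n-1)^{-1}(|x'|^2 + x_{n+1}^2)^{(1-n)/2}$, one obtains $\F_{x'}(G_N)(\xi,x_{n+1}) = c_n\,|\xi|^{-1} e^{-x_{n+1}|\xi|}$ for an explicit constant $c_n$ (here $n\ge 2$; for $n=1$ the Green's function $|x|^{1-n}$ degenerates and is replaced by $\ln|x|$, recovering exactly Lemma~\ref{lem:Hilbert}).

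Given this, for $f\in L^2(\R^n)$ the convolution structure yields $\F_{x'}(N(f))(\xi,x_{n+1}) = c_n\,|\xi|^{-1} e^{-x_{n+1}|\xi|}\F(f)(\xi)$, and therefore $\F_{x'}(\p_j N(f))(\xi,x_{n+1}) = i c_n\,\xi_j|\xi|^{-1} e^{-x_{n+1}|\xi|}\F(f)(\xi)$. Letting $x_{n+1}\to 0^+$, the multipliers $\xi_j|\xi|^{-1} e^{-x_{n+1}|\xi|}$ are uniformly bounded by $1$ and converge pointwise to $\xi_j/|\xi|$, so by dominated convergence $\p_j N(f)(\cdot,x_{n+1})\to \F^{-1}(i c_n\,\xi_j|\xi|^{-1}\F(f))$ in $L^2(\R^n)$. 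Comparing with the definition $\F(R_j f)(\xi) = \xi_j|\xi|^{-1}\F(f)(\xi)$ identifies this limit with $R_j f$, once the normalization of $G_N$ is fixed so that $ic_n = 1$; the boundary value $\p_j N(f)(x',0)$ is understood in the $L^2$ (or $L^p$) trace sense, exactly as in Remark~\ref{rmk:interpret}. One also records, mirroring Lemma~\ref{lem:Hilbert}, that $N(f)$ is harmonic in $\R^{n+1}_+$ with $\p_{n+1}N(f) = f$ on $\R^n\times\{0\}$, which is immediate from the same Fourier representation (the inner multiplier $e^{-x_{n+1}|\xi|}$ solves the ODE, and $\p_{n+1}$ at $x_{n+1}=0$ cancels the factor $|\xi|^{-1}$).

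Since this is a standard computation, there is no serious obstacle; the only points requiring care are the interpretation of the trace $\p_j N(f)(x',0)$ and the bookkeeping of the constant $c_n$. As an alternative (or a consistency check) paralleling the principal-value approach to Lemma~\ref{lem:Hilbert}, one can argue directly at the kernel level: $\p_j G_N(x',x_{n+1}) = (1-n)\,x_j (|x'|^2 + x_{n+1}^2)^{-(n+1)/2}$ is the $j$-th conjugate Poisson kernel, which converges as $x_{n+1}\to 0$ to the Riesz kernel $c_n\,x_j|x'|^{-n-1}$, and the associated singular integral is precisely the principal-value definition of $R_j$; the cancellation needed for this limit is the standard one for Calder\'on--Zygmund kernels, so it again extends from Lipschitz $f$ to $f\in L^2$ by density.
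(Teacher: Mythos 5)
Your Fourier-side argument is correct, and it is worth noting that the paper itself states Lemma \ref{lem:Riesz_trafo} \emph{without} proof: the only justification offered anywhere is the citation of \cite{G08} for Lemma \ref{lem:Hilbert} and the Fourier-side remark following it, and your computation is precisely the $n$-dimensional version of that remark (the identity $\F_{x'}(G_N)(\xi,x_{n+1})=c_n|\xi|^{-1}e^{-x_{n+1}|\xi|}$ obtained by integrating the Poisson-kernel transform in $x_{n+1}$, valid for $n\geq 2$, then multiplying by $i\xi_j$ and letting $x_{n+1}\to 0^+$ by dominated convergence in $L^2$). Two small points of care. First, the constant bookkeeping: with $G_N(x)=|x|^{1-n}$ unnormalized and the paper's convention $\F(R_jf)(\xi)=\frac{\xi_j}{|\xi|}\F(f)(\xi)$ (no factor $-i$), the identity $\p_j N(f)(\cdot,0)=R_jf$ holds only up to a fixed multiplicative constant, and the factor $i$ cannot be absorbed by any real normalization of $G_N$; the paper has exactly the same looseness in its Hilbert-transform remark (where the multiplier comes out as $i\sign(\xi)$ rather than $-i\sign(\xi)$), so this is a convention issue rather than a defect of your argument, but "fix the normalization so that $ic_n=1$" should be rephrased as "up to the normalizing constant of $G_N$ and the usual $-i$ in the multiplier convention". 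Second, for $f\in L^2(\R^n)$ the extension $N(f)(\cdot,x_{n+1})$ itself need not lie in $L^2$ tangentially, since the multiplier $|\xi|^{-1}e^{-x_{n+1}|\xi|}$ is singular at $\xi=0$; it is therefore cleaner (and is in effect what you do) to run the density/dominated-convergence step directly on $\p_jN(f)$, whose multiplier $i\xi_j|\xi|^{-1}e^{-x_{n+1}|\xi|}$ is bounded uniformly in $x_{n+1}$, exactly as the paper's own remark hedges with "assuming that all quantities are well-defined". With these caveats your proof is complete and consistent with the paper's intended justification; the kernel-level consistency check via the conjugate Poisson kernels is also fine.
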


Similarly as in the case of the Hilbert transform, we consider the truncated Riesz transform with respect to open, bounded Lipschitz sets:

\begin{defi}
\label{defi:truncated_Riesz}
Let $\Omega_1, \Omega_2 \subset \R^n$ be open, bounded Lipschitz sets. Assume that $f\in L^2(\Omega)$. Then we define the \emph{$j$th truncated Riesz transform} with respect to $\Omega_1, \Omega_2$ as
\begin{align*}
R_j^{\Omega_1,\Omega_2} f := \chi_{\Omega_2} R_j \chi_{\Omega_1} f.
\end{align*}
If there is no danger of confusion, we also abbreviate $R_j^{\Omega_1, \Omega_2}$ by $R_j^{\Omega_1}$. Moreover, we define $R^{\Omega_1, \Omega_2}$ as the vector $(R_{1}^{\Omega_1, \Omega_2},\dots, R_{n}^{\Omega_1, \Omega_2})$.
\end{defi}

We collect several properties of the truncated Riesz transforms:

\begin{lem}
\label{lem:prop_Riesz}
Let $\Omega_1, \Omega_2 \subset \R^n$ be open, bounded Lipschitz sets with $\overline{\Omega}_1 \cap \overline{\Omega}_2 = \emptyset$. Then the following properties hold:
\begin{itemize}
\item[(a)] The operators $R_j^{\Omega_1, \Omega_2}$ with $j\in\{1,\dots,n\}$ are smoothing operators.
\item[(b)] The Hilbert space adjoint of $R_j^{\Omega_1, \Omega_2}$ is given by $R_j^{ \Omega_2, \Omega_1}$. 
\item[(c)] If for some $f\in L^2(\Omega_1)$ and for all $j\in \{1,\dots,n\}$ it holds that $R_j^{\Omega_1, \Omega_2} f (x) = 0$ in $\Omega_2$, then $f=0$ as a function in $L^2(\Omega_1)$.
\item[(d)] The set
\begin{align*}
\mathcal{S}:= \{\chi_{\Omega_2} R_j f: f \in C^{\infty}_0(\Omega_1) \mbox{ and } j\in\{1,\dots,n\}\}
\end{align*}
is dense in $L^2(\Omega_2)$.
\end{itemize}
\end{lem}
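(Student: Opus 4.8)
\textbf{Plan of proof for Lemma \ref{lem:prop_Riesz}.}
The strategy is to mimic, mutatis mutandis, the one-dimensional arguments (Lemmas \ref{lem:adj} and \ref{cor:inj_dens}), using the Neumann harmonic extension realization of the Riesz transforms from Lemma \ref{lem:Riesz_trafo} together with weak (boundary) unique continuation for the Laplacian in $\R^{n+1}_+$. I treat the four parts in the order (a), (b), (c), (d), since (d) is deduced from (c) by duality, and (c) uses only elementary unique continuation while (b) is the natural replacement for Lemma \ref{lem:adj}.

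For \textbf{(a)}, I would note that the kernel of $R_j$ is, up to a constant, $\p_j(|x'|^{1-n})$, which is smooth and of Calder\'on--Zygmund type away from the origin. Since $\overline{\Omega}_1 \cap \overline{\Omega}_2 = \emptyset$, we have $\dist(\Omega_1,\Omega_2)>0$, so $R_j^{\Omega_1,\Omega_2}f(x) = \int_{\Omega_1} K_j(x-y) f(y)\,dy$ for $x\in\Omega_2$ is given by integration against a kernel that is $C^\infty$ on the relevant range $|x-y|\ge \dist(\Omega_1,\Omega_2)$; differentiating under the integral sign and using the $L^1$-boundedness of $f\chi_{\Omega_1}$ (Cauchy--Schwarz on the bounded set $\Omega_1$) shows $R_j^{\Omega_1,\Omega_2}f \in C^\infty(\overline{\Omega}_2)$ with all derivatives controlled by $\|f\|_{L^2(\Omega_1)}$; one can also phrase this via the interior smoothing of the harmonic extension $N(f)$ away from the supporting set, exactly as in the bullet points in Step 3 of the proof of Proposition \ref{prop:small}. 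For \textbf{(b)}, the cleanest route is the multiplier identity and Plancherel, copying the proof of Lemma \ref{lem:adj} verbatim: for $f\in L^2(\Omega_1)$, $g\in L^2(\Omega_2)$,
\begin{align*}
(R_j^{\Omega_1,\Omega_2}f, g)_{L^2(\Omega_2)}
= \left(\tfrac{\xi_j}{|\xi|}\F(\chi_{\Omega_1}f), \F(\chi_{\Omega_2}g)\right)_{L^2(\R^n)}
= \left(\F(\chi_{\Omega_1}f), \tfrac{\xi_j}{|\xi|}\F(\chi_{\Omega_2}g)\right)_{L^2(\R^n)}
= (f, R_j^{\Omega_2,\Omega_1}g)_{L^2(\Omega_1)},
\end{align*}
using that the Riesz multiplier $\xi_j/|\xi|$ is real and even (hence self-adjoint), in contrast to the odd, purely imaginary Hilbert multiplier which produced the sign in Lemma \ref{lem:adj}.

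For \textbf{(c)}, I would set $u:=N(f)=G_N\ast_{x'}f$, which solves $\D u=0$ in $\R^{n+1}_+$ and $\p_{n+1}u = f$ on $\R^n\times\{0\}$; here $\p_{n+1}u = 0$ on $(\R^n\setminus\overline{\Omega}_1)\times\{0\}$, in particular on $\Omega_2\times\{0\}$. The hypothesis says $\p_j u(x',0)=0$ on $\Omega_2$ for every $j=1,\dots,n$, i.e. the full tangential gradient of $u|_{\Omega_2\times\{0\}}$ vanishes; since $\Omega_2$ is open (and may be assumed connected, treating components separately), $u$ is constant, say $=c$, on $\Omega_2\times\{0\}$. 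Thus $\tilde u := u-c$ has vanishing Dirichlet and Neumann data on the flat piece $\Omega_2\times\{0\}$, so by weak boundary unique continuation for harmonic functions (\cite{EA97}, \cite{AEK95}, \cite{KN98}, exactly as invoked in the proof of Lemma \ref{cor:inj_dens}) $\tilde u\equiv 0$, hence $u\equiv c$ in $\R^{n+1}_+$; evaluating $\p_{n+1}u$ on $\Omega_1\times\{0\}$ gives $f=0$, and then also $c=0$. Finally \textbf{(d)} follows by Hahn--Banach: if $v\in L^2(\Omega_2)$ satisfies $(v, \chi_{\Omega_2}R_j g)_{L^2(\Omega_2)}=0$ for all $g\in C^\infty_0(\Omega_1)$ and all $j$, then by (b) $(R_j^{\Omega_2,\Omega_1}v, g)_{L^2(\Omega_1)}=0$ for all such $g$, whence $R_j^{\Omega_2,\Omega_1}v = 0$ in $\Omega_1$ for every $j$, and (c) (with the roles of $\Omega_1,\Omega_2$ exchanged) forces $v=0$.

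The only genuinely delicate point is the boundary unique continuation step in (c): it requires that $\Omega_2\times\{0\}$ be a piece of a hyperplane (true here, since the extension is into $\R^{n+1}_+$) and that one may invoke vanishing of Cauchy data on a flat boundary portion of positive measure; the Lipschitz regularity of $\Omega_2$ is used only to make sense of traces and to reduce to connected components. Everything else is a routine transcription of the one-dimensional argument, which is why I would present it as a sketch.
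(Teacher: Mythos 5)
Your proposal is correct and follows essentially the same route as the paper: the kernel representation for (a), the Plancherel/multiplier computation for (b), the harmonic-extension-plus-weak-boundary-unique-continuation argument for (c), and Hahn--Banach duality via (b) and (c) for (d). One trivial quibble: $\xi_j/|\xi|$ is odd, not even --- it is the realness of the multiplier (in the paper's stated convention $\F(R_j f)=\tfrac{\xi_j}{|\xi|}\F f$) that makes the adjoint computation close without a sign, so your argument stands as written.
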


\begin{proof}
Property (a) is a consequence of the kernel representation of $R_j$; property (b) follows either from Plancherel and the multiplier definition of $R_j$, or from integration by parts in combination with the extension point of view. In order to infer (c), we note that $R_j^{\Omega_1, \Omega_2} f = 0$ in $\Omega_2$ for all $j\in \{1,\dots,n\}$ implies that $f= c$ in $\Omega_2$. The claim then follows from using (weak) boundary unique continuation for the associated harmonic extension (as in the analogous Lemma \ref{cor:inj_dens}).\\
Last but not least the density result is a consequence of the Hahn-Banach theorem and properties (b) and (c) from above: Indeed, if $v\in L^2(\Omega_2)$ has the property that
\begin{align*}
(R_j f, v)_{L^2(\Omega_2)} = 0 \mbox{ for all } f\in C^{\infty}_0(\Omega_1) \mbox{ and } j\in\{1,\dots,n\},
\end{align*}
property (b) from above yields that
\begin{align*}
(f, R_j (\chi_{\Omega_2} v))_{L^2(\Omega_1)} = 0 \mbox{ for all } f\in C^{\infty}_0(\Omega_1) \mbox{ and } j\in\{1,\dots,n\}.
\end{align*}
Thus, for all $j\in \{1,\dots,n\}$ it holds that $R_j(\chi_{\Omega_2}v) =0$ on $\Omega_1$. But then property (c) implies that $v=0$, which proves the desired density result.
\end{proof}

As in the case of the Hilbert transform, the \emph{qualitative} injectivity and density properties from Lemma \ref{lem:prop_Riesz} (c) and (d) can be refined. The \emph{quantitative} counterpart of the result from Lemma \ref{lem:prop_Riesz} is given by the following proposition.

\begin{prop}[Quantitative unique continuation]
\label{prop:quant_unique2_Riesz}
Let $\Omega_1,\Omega_2$ be open, bounded Lipschitz sets such that $\overline{\Omega}_1\cap \overline{\Omega}_2 = \emptyset$.
Denote by $R^{\Omega_2}:L^2(\R^n) \rightarrow L^2(\R^n)$ the truncated Riesz transform with respect to $\Omega_2$ and assume that 
 $g\in H^1(\Omega_2)$.
Then, there exist constants $\tilde{\sigma}>0$, $C>1$ depending only on the geometry of $\Omega_1, \Omega_2$ and their relative locations in $\R^{n}$ such that
 \begin{align}
 \label{eq:almost_invert_Riesz}
  \| g\|_{H^1(\Omega_2)}
  \leq  \exp\left( C\left( 1+\frac{\|g\|_{H^1(\Omega_2)}^{\tilde{\sigma}} }{\| g\|_{L^2(\Omega_2)}^{\tilde{\sigma}}} \right) \right) \|R^{\Omega_2} g\|_{L^2(\Omega_1)}.
 \end{align} 
\end{prop}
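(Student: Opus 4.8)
The plan is to mirror the one-dimensional argument for Proposition~\ref{prop:HT}, replacing the interval geometry of Convention~\ref{con:int} by the Lipschitz geometry of $\Omega_1,\Omega_2\subset\R^n$ and the half-plane $\R^2_+$ by $\R^{n+1}_+$. By Lemma~\ref{lem:Riesz_trafo} the truncated Riesz transform $R^{\Omega_2}g$ is realized, componentwise, as the tangential gradient $\nabla_{x'} N(g)(x',0)$ of the Neumann harmonic extension $u:=N(g)$ of $\chi_{\Omega_2}g$ into $\R^{n+1}_+$, which solves $\Delta u=0$ in $\R^{n+1}_+$ with $\p_{n+1}u=\chi_{\Omega_2}g$ on $\R^n\times\{0\}$. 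As in Remark~\ref{rmk:unique} we may subtract a constant so that $u$ vanishes at one point of $\Omega_1$; then the boundary trace of $u$ on $\Omega_1\times\{0\}$ is controlled by $\|\nabla_{x'}u\|_{L^2(\Omega_1\times\{0\})}=\|R^{\Omega_2}g\|_{L^2(\Omega_1)}$ via the fundamental theorem of calculus along paths inside $\Omega_1$ (here the Lipschitz/connectedness hypothesis on $\Omega_1$ is used).

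The core is the higher-dimensional analogue of Proposition~\ref{prop:small}: for every $\epsilon>0$, $\delta\in(0,1)$,
\begin{equation*}
\|\p_{n+1}u\|_{L^2(\Omega_2\times\{\delta\})}
\leq \exp\!\left(C(|\ln\epsilon|+1)/\delta^{\sigma}\right)\|R^{\Omega_2}g\|_{L^2(\Omega_1)}
+\tfrac{\epsilon}{2}\|g\|_{L^2(\Omega_2)}.
\end{equation*}
The proof is verbatim the three-step argument of Proposition~\ref{prop:small}: a three balls inequality for harmonic functions applied to $v=\p_{n+1}u$ (harmonic in $\R^{n+1}\cap\{x_{n+1}>\delta/2\}$), iterated along a chain of balls joining a neighborhood of $\Omega_1\times[1/2,1]$ to $\Omega_2\times[\delta/2,2\delta]$ — the number $N$ of balls scales like $C(\Omega_1,\Omega_2)(|\log\delta|+1)$, with the $\delta$-dependence coming from shrinking balls near $\Omega_2\times\{0\}$; then Caccioppoli and trace estimates to upgrade to an $H^1$ bound and a bound on the $\delta$-slice; then the Lebeau–Robbiano interpolation inequality~\cite{LR95} to propagate the boundary Neumann (and the small tangential-gradient) data from $\Omega_1\times\{0\}$ into $\Omega_1\times[1/2,1]$; and finally the smoothing estimate $\|u\|_{H^1(\tilde K)}\le C\delta^{-1}\|g\|_{L^2(\Omega_2)}$ away from $\Omega_2$ together with Young's inequality. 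Here one uses, exactly as in the $1$d case, that outside $\Omega_2$ the solution extends harmonically across $\R^n\times\{0\}$ by even reflection, so only the distance to $\Omega_2$ (not to all of $\R^n\times\{0\}$) matters.

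Next one needs the analogue of Lemma~\ref{lem:boundary}: for $g\in H^1(\Omega_2)$,
\begin{equation*}
\|g\|_{L^2(\Omega_2)}\leq C\left(\|\p_{n+1}u\|_{L^2(\Omega_2\times\{\delta\})}+\delta^{\beta}\|g\|_{L^2(\Omega_2)}+\delta^{\beta}\|\nabla g\|_{L^2(\Omega_2)}\right)
\end{equation*}
for some $\beta>0$. As in the $1$d proof this follows by writing $\p_{n+1}u$ on the $\delta$-slice in terms of its value on $\Omega_2\times\{0\}$ via the fundamental theorem, then using $\p_{n+1}^2 u=-\Delta_{x'}u$ (interior, and up-to-the-boundary, regularity) and the Fourier/kernel representation of the Neumann extension to estimate $\p_{n+1}^2 u$ near the boundary in terms of $\|\nabla g\|_{L^2}$ and $\delta$-weighted powers of $\|g\|_{L^2}$, splitting $g$ with a cut-off at distance $\sim\delta^{\gamma}$ from $\p\Omega_2$; the only change from the $1$d computation is bookkeeping of the $n$-dimensional convolution kernel $\p^2_{x'}G_N$, which has the same homogeneity $|x|^{-n-1}$ as needed, and the use of the Lipschitz boundary so that $|\Omega_2\setminus\{\dist(\cdot,\p\Omega_2)\ge\delta^{\gamma}\}|\lesssim\delta^{\gamma}$. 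Combining the two displayed estimates with the choice $\epsilon=\delta$ and $\delta=\min\{(\|g\|_{L^2(\Omega_2)}/(C\|\nabla g\|_{L^2(\Omega_2)}))^{1/\beta},\,1/100\}$, then absorbing the resulting small multiple of $\|g\|_{L^2(\Omega_2)}$ on the left, yields \eqref{eq:almost_invert_Riesz} with $\tilde\sigma$ any exponent strictly bigger than $\sigma=C|\ln\gamma|$ (the strict inequality to swallow the logarithm). I expect the main obstacle to be purely technical rather than conceptual: making the geometric inputs uniform over the Lipschitz domains — constructing the chain of balls with controlled cardinality $N\sim C(\Omega_1,\Omega_2)(|\log\delta|+1)$ whose interior stays at distance $\gtrsim\delta$ from $\Omega_2\times\{0\}$, justifying the up-to-the-boundary $H^2$ regularity of $u$ near $\Omega_2\times\{0\}$ (needed for $\p_{n+1}^2u=-\Delta_{x'}u$ pointwise a.e. on the boundary), and handling the collar estimate $|\Omega_2^c_{\delta,\gamma}|\lesssim\delta^{\gamma}$ — all of which are standard for bounded Lipschitz sets but require the Lipschitz hypothesis in an essential way.
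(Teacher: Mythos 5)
Your proposal follows essentially the same route as the paper: the paper's proof of Proposition \ref{prop:quant_unique2_Riesz} is exactly a sketch of what you spell out, namely the higher-dimensional analogue of the propagation of smallness estimate of Proposition \ref{prop:small} (three balls along a chain plus the Lebeau--Robbiano interpolation estimate, giving \eqref{eq:quant_control_1}) combined with the analogue of Lemma \ref{lem:boundary} (fundamental theorem in $x_{n+1}$, harmonicity $\p_{n+1}^2 u=-\Delta_{x'}u$, and $L^2$/Fourier kernel bounds with a $\delta^{\gamma}$-collar cut-off), concluded as in Section \ref{sec:invert}. Your write-up in fact supplies more detail (e.g. the collar estimate and the dimension-independent Fourier bound) than the paper's own argument, and is correct.
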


\begin{proof}
Since the arguments for this result are analogous to the ones for the Hilbert transform, we only point out the main ingredients. Similarly as in the setting of the Hilbert transform, they consist two parts:
\begin{itemize}
\item As the first an main ingredient, we rely on an (interior) propagation of smallness result. As in Proposition \ref{prop:small} we have that for any $\delta \in (0,1)$ and any $\epsilon>0$
\begin{equation}
\label{eq:quant_control_1}
\begin{split}
\|\p_{n+1} u\|_{L^2(\Omega_2 \times \{\delta\})} 
&\leq \exp(C(\Omega_1,\Omega_2)(|\ln(\epsilon)|+2)/\delta^{\sigma}) \|R^{\Omega_2} g\|_{L^2(\Omega_1)} + \frac{\epsilon}{2} \|g\|_{L^2(\Omega_2)}.
\end{split}
\end{equation}
The proof of this result proceeds analogously as the one for Proposition \ref{prop:small}, at which point we had not made substantial use of the one-dimensionality of the set-up. As main steps we use a combination of an interior propagation of smallness result and the Lebeau and Robbiano bulk-boundary interpolation estimate \cite{LR95}.
\item As the second ingredient in the proof of \eqref{eq:almost_invert_Riesz} we relate the norms $\|\p_{n+1} u\|_{L^2(\Omega_2 \times \{\delta\})} $ and $\|\p_{n+1} u\|_{L^2(\Omega_2 \times \{0\})} $, which follows from analogous arguments as in Lemma \ref{lem:boundary} (fundamental theorem, $L^2$ based Green's function estimates by direct Fourier methods).
\end{itemize}
As in the argument in Section \ref{sec:invert} this then yields the desired result.
\end{proof}

As an analogue of Proposition \ref{prop:HT2}, and as a refinement of Lemma \ref{lem:prop_Riesz} (d), we also present a \emph{quantitative} approximation result for the truncated Riesz transforms.

\begin{prop}
\label{prop:approx_Riesz}
Let $\epsilon>0$ and let $\Omega_1,\Omega_2 \subset \R^n$ be open, bounded Lipschitz sets with $\overline{\Omega}_1\cap \overline{\Omega}_2 = \emptyset$.
Denote by $R^{\Omega_1}_j:L^2(\R^n) \rightarrow L^2(\R^n)$ the $j$th truncated Riesz transform with respect to $\Omega_1$ and assume that $h\in H^1(\Omega_2)$.
Then there exist constants $\sigma>0$, $C>1$ (which only depend on the geometries of $\Omega_1, \Omega_2$ and their relative locations) and functions $f_1,\dots, f_n\in L^2(\Omega_1)$ such that
\begin{align}
\label{eq:approx_Riesz}
\|h- \sum\limits_{j=1}^{n} R_j^{\Omega_1} f_j\|_{L^2(\Omega_2)} \leq \epsilon  \mbox{ and }
\|f\|_{L^2(\Omega_1)} \leq e^{C(1+\|h\|_{H^1(\Omega_2)}^{\sigma}/\epsilon^{\sigma})} \|h\|_{L^2(\Omega_2)},
\end{align}
where $f=(f_1,\dots,f_n)$.
\end{prop}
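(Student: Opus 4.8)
The plan is to mimic the proof of Proposition \ref{prop:HT2} almost verbatim, replacing the scalar control function by an $n$-tuple and the single operator $H_J$ by the vector $R^{\Omega_2} = (R_1^{\Omega_2},\dots,R_n^{\Omega_2})$. As in the one-dimensional case, I would first reduce to the situation where $h\in H^1_0(\Omega_2)$ by extending $h$ to a slightly fattened domain $\Omega_{2,h_1}$ (still disjoint from $\Omega_1$) with comparable norms, exactly as in the discussion preceding Lemma \ref{lem:ex}. Then, for fixed $\epsilon>0$, I would introduce the functional
\begin{align*}
\G_\epsilon(g) := \frac{1}{2}\int\limits_{\Omega_1} |R^{\Omega_2} g|^2\,dx + \epsilon \|g\|_{L^2(\Omega_2)} - \int\limits_{\Omega_2} h g\,dx,
\end{align*}
on $g\in L^2(\Omega_2)$, where $|R^{\Omega_2}g|^2 = \sum_{j=1}^n |R_j^{\Omega_2}g|^2$. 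Convexity and weak lower semicontinuity are immediate, and coercivity follows exactly as in Lemma \ref{lem:ex}: normalizing a maximizing sequence $\tilde g_j$ with $\|\tilde g_j\|_{L^2(\Omega_2)}=1$, either $\liminf_j \|R^{\Omega_2}\tilde g_j\|_{L^2(\Omega_1)}>0$ and we are done, or the weak limit $\tilde g$ satisfies $R_j^{\Omega_2}\tilde g = 0$ in $\Omega_1$ for all $j$, whence $\tilde g = 0$ by the qualitative unique continuation result Lemma \ref{lem:prop_Riesz} (c). Thus a unique minimizer $\bar g$ exists.

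Next I would extract the Euler--Lagrange (sub)differential inequality as in the proof of Lemma \ref{lem:ex}: taking one-sided variations $\bar g + \mu g$ and letting $\mu\to 0_\pm$ yields
\begin{align*}
\left| \sum\limits_{j=1}^n \int\limits_{\Omega_1} R_j^{\Omega_2}\bar g\, R_j^{\Omega_2} g\,dx - \int\limits_{\Omega_2} h g\,dx \right| \leq \epsilon \|g\|_{L^2(\Omega_2)} \quad \text{for all } g\in L^2(\Omega_2).
\end{align*}
Setting $f_j := -\chi_{\Omega_1} R_j^{\Omega_2}\bar g$ and using the adjoint identity from Lemma \ref{lem:prop_Riesz} (b), i.e. $(f_j, R_j^{\Omega_2} g)_{L^2(\Omega_1)} = (R_j^{\Omega_1} f_j, g)_{L^2(\Omega_2)}$, this becomes $|\int_{\Omega_2}(\sum_j R_j^{\Omega_1} f_j - h) g\,dx| \leq \epsilon \|g\|_{L^2(\Omega_2)}$ for all $g$, which gives the approximation bound $\|h - \sum_j R_j^{\Omega_1} f_j\|_{L^2(\Omega_2)} \leq \epsilon$ by duality. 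One also records, as in Remark \ref{rmk:ELG}, that $\min_g \G_\epsilon(g) = -\tfrac12 \|f\|_{L^2(\Omega_1)}^2$.

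For the cost estimate I would split $\G_\epsilon = \tilde\G_{\epsilon,\delta} + \tfrac{\epsilon}{2}\|g\|_{L^2(\Omega_2)} - \int_{\Omega_2} h(\p_{n+1} v(x,0) - \p_{n+1} v(x,\delta))\,dx$ where $v = N(g)$ is the harmonic extension from Lemma \ref{lem:Riesz_trafo} and $\tilde\G_{\epsilon,\delta}(g) := \tfrac12\int_{\Omega_1}|R^{\Omega_2}g|^2\,dx - \int_{\Omega_2} h\,\p_{n+1}v(x,\delta)\,dx + \tfrac{\epsilon}{2}\|g\|_{L^2(\Omega_2)}$. Choosing $\delta$ so that the leftover term is nonnegative (which, via $\p_{n+1n+1}v = -\Delta' v$ on slices, the Fourier bound $\|\p_j v(\cdot,y)\|_{L^2}\le \|g\|_{L^2(\Omega_2)}$, and $h\in H^1_0(\Omega_2)$, forces $\delta \lesssim \epsilon/\|h\|_{H^1(\Omega_2)}$) gives $I_1 \geq I_2 := \inf_g \tilde\G_{\epsilon,\delta}(g)$, hence $\|f\|_{L^2(\Omega_1)}^2 \leq -2I_2$. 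Now I apply the higher-dimensional propagation of smallness estimate \eqref{eq:quant_control_1} with $\tilde\epsilon := \epsilon\|h\|_{L^2(\Omega_2)}^{-1}$ to bound $|\int_{\Omega_2} h\,\p_{n+1}v(x,\delta)\,dx| \leq \exp(C(\Omega_1,\Omega_2)(1+|\ln\tilde\epsilon|)/\delta^\sigma)\|R^{\Omega_2}g\|_{L^2(\Omega_1)}\|h\|_{L^2(\Omega_2)} + \tfrac{\tilde\epsilon}{2}\|h\|_{L^2(\Omega_2)}\|g\|_{L^2(\Omega_2)}$, then absorb via Young's inequality to get $I_2 \geq -(\exp(C(1+|\ln(\epsilon\|h\|_{L^2(\Omega_2)}^{-1})|/\delta^\sigma))+1)\|h\|_{L^2(\Omega_2)}^2$. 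Substituting $\delta \sim \epsilon/\|h\|_{H^1(\Omega_2)}$ and relabelling the exponent ($\tilde\sigma > \sigma$ to absorb the logarithm) yields $\|f\|_{L^2(\Omega_1)} \leq e^{C(1+\|h\|_{H^1(\Omega_2)}^\sigma/\epsilon^\sigma)}\|h\|_{L^2(\Omega_2)}$, as claimed. The only genuinely new input compared to the one-dimensional argument is the vectorial propagation of smallness estimate \eqref{eq:quant_control_1}, whose proof is asserted in Proposition \ref{prop:quant_unique2_Riesz} to go through unchanged (three balls for harmonic functions, Caccioppoli, trace bounds, and the Lebeau--Robbiano interpolation inequality \cite{LR95} applied componentwise to $\p_{n+1} u$ with the path of balls now chosen in $\R^{n+1}_+$); I expect the bookkeeping of the dependence of $C$ and $N$ on the geometry of $\Omega_1,\Omega_2$ (rather than just two interval parameters) to be the main — though still essentially routine — technical point.
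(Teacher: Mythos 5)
Your proposal follows essentially the same route as the paper's proof: the same vector-valued functional $\frac{1}{2}\|R^{\Omega_2}g\|_{L^2(\Omega_1)}^2+\epsilon\|g\|_{L^2(\Omega_2)}-(h,g)_{L^2(\Omega_2)}$, coercivity via the qualitative unique continuation of Lemma \ref{lem:prop_Riesz} (c), the choice $f_j=-R_j^{\Omega_2}\bar g$ with the Euler--Lagrange inequality and duality for the approximation bound, and the same splitting into an auxiliary $\delta$-shifted functional combined with the higher-dimensional propagation of smallness estimate \eqref{eq:quant_control_1} and the choice $\delta\sim\epsilon/\|h\|_{H^1(\Omega_2)}$ for the cost bound. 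This matches the paper's argument in all essential respects.
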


\begin{proof}
The proof is similar as the one from Proposition \ref{prop:HT2}. The only additional difficulty stems from the fact that the density result of Lemma \ref{lem:prop_Riesz} (d) and the quantitative propagation of smallness estimate \eqref{eq:quant_control_1} requires information on \emph{all} partial Riesz transforms. In order to take this into account, we consider a slightly modified functional:
\begin{align*}
\Je: L^2(\Omega_2) \rightarrow [-\infty,\infty], \ 
\Je(g) := \frac{1}{2}\|R^{\Omega_2} g \|_{L^2(\Omega_1)}^2 + \epsilon \|g\|_{L^2(\Omega_1)} - (h,g)_{L^2(\Omega_2)}.
\end{align*}
Here we use the Euclidean norm to define the $L^2$ norm of the vector valued function $R^{\Omega_2}g$. The existence of minimizers to this functional can again be reduced to proving coercivity for the functional, which in turn only involves the qualitative unique continuation result from Lemma \ref{lem:Riesz_trafo} (c). Defining $f_j:= - R_j^{\Omega_2} \bar{g}$, where $\bar{g}$ denotes the minimizer of the functional $\Je$ and computing the Euler-Lagrange equations for the functional as in the proof of Lemma \ref{lem:ex} thus yields the first bound from \eqref{eq:approx_Riesz}.\\
In order to deduce the estimate on the cost of controllability, we argue as in the proof of Proposition \ref{prop:HT2} and consider the auxiliary functional
\begin{align*}
&\Jed: L^2(\Omega_2) \rightarrow [-\infty, \infty], \\
& \Jed(g) = \frac{1}{2}\|R^{\Omega_2} g\|^2_{L^2(\Omega_1)} - (h,\p_{n+1} N(g)(x',\delta))_{L^2(\Omega_2)} + \frac{\epsilon}{2}\|g\|_{L^2(\Omega_2)},
\end{align*}
where $\delta>0$ is a suitable constant, which is still to be chosen. Rewriting 
\begin{align*}
\Je(g) = \Jed(g) + \frac{\epsilon}{2}\|g\|_{L^2(\Omega_2)} - (h,\p_{n+1} N(g)(x',0) - \p_{n+1} N(g)(x',\delta) )_{L^2(\Omega_2)},
\end{align*}
and choosing $\delta>0$ subject to the condition that for all $g\in L^2(\Omega_2)$
\begin{align*}
\frac{\epsilon}{2}\|g\|_{L^2(\Omega_2)}- (h,\p_{n+1} N(g)(x',0) - \p_{n+1} N(g)(x',\delta) )_{L^2(\Omega_2)} \geq 0,
\end{align*}
implies that it suffices to estimate $I_2:=\inf\limits_{g\in L^2(\Omega_2)} \Jed(g)$. Replacing Proposition \ref{prop:small}
by \eqref{eq:quant_control_1} allows us to use exactly the same ideas as in the derivation of Proposition \ref{prop:HT2}, we hence omit the details of this.
\end{proof}

\begin{rmk}
\label{rmk:choice_of_norms}
There exist several alternative formulations of the approximation result from Proposition \ref{prop:approx_Riesz}. Instead of considering sums of Riesz transforms and a vector $f=(f_1,\dots,f_n)$, it would for instance also have been possible to consider a single -- in a sense the maximal -- Riesz transform. In this case the approximation in the first estimate in \eqref{eq:approx_Riesz} would have worked with a single function $f$. Here the approximation result would have turned into the following statement: There are constants $\sigma>0$, $C>1$ (depending only on the geometries of $\Omega_1, \Omega_2$ and the relative location of these two sets) such that for each $\epsilon>0$ there exist a function $f\in L^2(\Omega_1)$ and a value $j\in\{1,\dots,n\}$ such that
\begin{align}
\label{eq:approx_Riesz1}
\|h-  R_j^{\Omega_1} f\|_{L^2(\Omega_2)} \leq \epsilon  \mbox{ and }
\|f\|_{L^2(\Omega_1)} \leq e^{C(1+\|h\|_{H^1(\Omega_2)}^{\sigma}/\epsilon^{\sigma})} \|h\|_{L^2(\Omega_2)}.
\end{align}
To deduce this result, we can for instance consider the functional
\begin{align*}
\tilde{\Je}(g) = \frac{1}{2}\max\limits_{j\in \{1,\dots,n\}}\|R_j^{\Omega_2} g\|_{L^2(\Omega_1)}^2  + \epsilon \|g\|_{L^2(\Omega_1)} - (h,g)_{L^2(\Omega_2)}
\end{align*}
and argue along the lines of the proof of Proposition \ref{prop:HT}.
\end{rmk}

\section{Perturbations of Riesz Transforms}
\label{sec:Perturb}

It is possible to embed the previous two examples into a slightly more general class of operators, which are given as $L^2(\R^n)$ adjoints of ``Riesz type transforms" of more general elliptic operators. To specify this, we consider a uniformly elliptic operator $L =  \p_i a^{ij}(x')\p_j $, $i,j\in\{1,\dots,n\}$,  with smooth, uniformly elliptic, symmetric coefficients $a^{ij}$. Its associated ``harmonic" extension (c.f. \cite{CS07}, \cite{ST10}, \cite{CS16}) is defined as
\begin{align}
\label{eq:var_L}
L u + \p_{n+1}^2 u & = 0 \mbox{ in } \R^{n+1}_+,\\
\p_{n+1} u & = f \mbox{ on } \R^{n} \times \{0\}.
\end{align}
For simplicity and due to the obvious distinction in the behaviour of the Neumann Green's functions in two and higher dimensions, we only consider the situation $n>1$ (in the case $n=1$ similar arguments would work in Hardy and BMO spaces; instead of presenting the details for this, we however refer to \cite{KN85}, \cite{DM95}): For $f\in C^{\infty}_0(\R^n)$ we consider energy solutions of this, i.e. we define a function $u:\R^{n+1}_+ \rightarrow \R$ to be a solution, if it is a distributional solution in the sense that
\begin{align*}
(a^{ij}\p_i u, \p_j \varphi)_{L^2(\R^{n+1}_+)} +(\p_{n+1} u, \p_{n+1} \varphi)_{L^2(\R^{n+1}_+)} = (f,\varphi)_{L^2(\R^n \times \{0\})}\\
 \mbox{ for all } \varphi \in C^{\infty}_0(\overline{\R}^{n+1}_+),
\end{align*} 
and if $u\in \dot{H}^{1}(\R^{n+1}_+)\cap L^{2^*}(\R^{n+1}_+)$, where $2^*:= \frac{2n}{n-2}$ denotes the Sobolev embedding exponent. By energy and trace estimates and Sobolev embedding, we obtain that for any $x_{n+1}\geq 0$
\begin{align*}
\|u\|_{L^q(\R^n \times \{x_{n+1}\})}
&\leq C\|u\|_{\dot{H}^{1/2}(\R^n \times \{x_{n+1}\})}
\leq C\|u\|_{\dot{H}^{1}(\R^{n+1}_+)}\\
&\leq C \|f\|_{\dot{H}^{-1/2}(\R^n \times \{0\})}
\leq C \|f\|_{L^p(\R^n \times \{0\})},
\end{align*}
where $p=\frac{2n}{1+n}$ and $q=p' = \frac{2n}{n-1}$ (and $n>1$). Thus, by the Schwartz kernel theorem, there exists a kernel $K_{x_{n+1}}(x',y')$ with the property that $u$ can be represented by it:
\begin{align*}
u(x',x_{n+1}) = \int\limits_{\R^n \times \{0\}}K_{x_{n+1}}(x',y') f(y') dy'.
\end{align*}
By virtue of duality and the symmetry of $a^{ij}$ it follows that $K_{x_{n+1}}(x',y') = K_{x_{n+1}}(y',x')$. 

We remark that using this representation formula, it is for instance possible to deduce quantitative estimates of $K_{x_{n+1}}(x',y')$ in the tangential directions. To this end, we fix $x_{n+1}=1$.
Considering a compactly supported function $f$, a fixed point $y' \in \R^{n}$ and a further point $x=(x',1)$ with $\dist(x',\supp(f))=2$ implies that in $B_1(x)$ the function $u$ and its derivatives are bounded. Thus, for all $\alpha = (\alpha',0) \in \N^{n+1}$ the mapping $f \mapsto \p_x^{\alpha} u(x)$ is a linear continuous map, which has a kernel representation. Repeating this argument also for the second variable (by duality and Sobolev embedding) hence implies that for the points $x,y$ as above
\begin{align*}
|\p_x^{\alpha} K_{1}(x',y')|\leq C(\alpha) \mbox{ for all } \alpha \in \N^n.
\end{align*}
By (tangential) rescaling, this in particular entails that the kernel $K_{1}(x',y')$ and its first order tangential derivatives enjoy the same bounds as the Neumann Green's function for the Laplacian. 
Estimates for the $x_{n+1}$ dependence follow from scaling.

In analogy to the notation in the previous sections we set $G^{a}_N(x,y):=K_{x_{n+1}}(x',y')$.\\

For this definition of a solution of the extension problem, i.e. $u:= G^{a}_N\ast_{x'} f$, and for $j\in\{1,\dots,n\}$ we study the mapping
\begin{equation}
\begin{split}
 \label{eq:Gen_Hilbert}
 R_j^a: L^2(\R^n) \rightarrow L^2(\R^n),\
 f \mapsto \p_j u (x',0),
\end{split}
\end{equation}
where the existence of and the bounds for the corresponding non-tangential limits can for instance be obtained through a Rellich-Necas-Payne-Weinberger formula \cite{McLean} and suitable approximation arguments.
In analogy to the previous section, we call the resulting operators $R_j^{a}$ the \emph{$j$th generalized 
Riesz transforms associated with the operator $L$}. Similarly as before, we define the \emph{truncated generalized Riesz transforms} as
\begin{align*}
R_j^{a,\Omega_1,\Omega_2} := \chi_{\Omega_2} R_j^a \chi_{\Omega_1}. 
\end{align*}
Moreover, we let $A^a_j$ denote the corresponding adjoint operators, i.e. we assume that
\begin{align*}
(A^a_j (\chi_{\Omega_2} f),g)_{L^{2}(\Omega_1)} = (f, R_j^{a,\Omega_1,\Omega_2} g)_{L^2(\Omega_2)} \mbox{ for all } f\in L^2(\Omega_2).
\end{align*}
Unlike previously, it is non-trivial to express these $L^2$ adjoints explicitly. As, however, all the approximation properties from the previous section only depended on properties of the adjoint, i.e. of the uniformly elliptic operator $L+\p_{n+1}^2$, our main results remain valid in the present, more general set-up. \\

In particular, we infer the following conditional invertibility and \emph{quantitative} unique continuation result:

\begin{prop}[Quantitative unique continuation]
\label{prop:quant_unique2_Riesz_a}
Let $L$ be as above.
Let $\Omega_1,\Omega_2$ be open, bounded Lipschitz sets such that $\overline{\Omega}_1\cap \overline{\Omega}_2 = \emptyset$.
Denote by $R^{a,\Omega_2}:L^2(\R^n) \rightarrow L^2(\R^n)$ the truncated Riesz transform with respect to $\Omega_2$ and $L$, and assume that 
 $g\in H^1(\Omega_2)$.
 Then, there exists constants $\tilde{\sigma}>0$, $C>1$ (depending only on the geometries of $\Omega_1, \Omega_2$ and their relative locations in $\R^n$) such that
 \begin{align}
 \label{eq:almost_invert_Riesz_1}
  \| g\|_{H^1(\Omega_2)}
  \leq  \exp\left( C\left(1 +\frac{\|g\|_{H^1(\Omega_2)}^{\tilde{\sigma}} }{\| g\|_{L^2(\Omega_2)}^{\tilde{\sigma}}} \right) \right) \|R^{a,\Omega_2} g\|_{L^2(\Omega_1)}.
 \end{align} 
\end{prop}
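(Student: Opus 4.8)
The plan is to reproduce the two-ingredient argument used for Proposition~\ref{prop:quant_unique2_Riesz}, with the Laplacian on $\R^{n+1}_+$ replaced everywhere by the uniformly elliptic operator $L + \p_{n+1}^2$. Two structural facts, both consequences of the $x_{n+1}$-independence of the coefficients $a^{ij}$, make this replacement cost-free. First, writing $u := G^a_N \ast_{x'}(\chi_{\Omega_2} g)$ for the extension from \eqref{eq:var_L}, the vertical derivative $v := \p_{n+1} u$ solves the \emph{same} equation $L v + \p_{n+1}^2 v = 0$ in $\R^{n+1}_+$, since differentiating in $x_{n+1}$ produces no coefficient terms. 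Second, on $(\R^n \setminus \overline{\Omega_2}) \times \{0\}$ the Neumann datum $\chi_{\Omega_2} g$ vanishes, so $u$, and hence $v$, extends by even reflection in $x_{n+1}$ across that portion of the boundary as a solution of the same equation; this makes purely interior estimates — three balls inequalities arising from Carleman estimates for the smooth, uniformly elliptic operator $L + \p_{n+1}^2$, together with Caccioppoli and interior trace bounds — available away from $\Omega_2$, exactly as in Proposition~\ref{prop:small}. In addition, the estimates on $G^a_N$ and its first tangential derivatives recorded before the statement of the proposition supply the analogues of the explicit kernel bounds used in Section~\ref{sec:small}.

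First I would establish the propagation of smallness estimate \eqref{eq:quant_control_1}. Applying the interior three balls inequality to $v = \p_{n+1} u$ along a chain of $N \sim C(\Omega_1,\Omega_2)(|\log \delta| + 1)$ balls joining $\Omega_1 \times [1/2,1]$ to $\Omega_2 \times [\delta/2, 2\delta]$ — the radii necessarily shrinking as one approaches $\Omega_2 \times \{0\}$, which accounts for the $\delta$-dependence of $N$ — and then upgrading the conclusion via Caccioppoli's inequality and trace bounds, one controls $\|\p_{n+1} u\|_{L^2(\Omega_2 \times \{\delta\})}$ by a product of a negative power of $\delta$, a small power of $\|u\|_{H^1}$ on a fattened tube $\tilde{K}$, and $\|v\|_{L^2(\Omega_1 \times [1/2,1])}$. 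The last factor is then handled, after subtracting a constant from $u$ so that $u$ vanishes at a point of $\Omega_1$, by the Lebeau--Robbiano bulk-boundary interpolation inequality \cite{LR95} applied to $u$; using the fundamental theorem to estimate the boundary $L^2$-norm of $u$ and recalling that $\p_j u(\cdot,0) = R^a_j g$, the boundary contribution is bounded by $\|R^{a,\Omega_2} g\|_{L^2(\Omega_1)}$. Finally, the crude bound $\|u\|_{H^1(\tilde{K})} \leq C\delta^{-1}\|g\|_{L^2(\Omega_2)}$ — which follows from the smoothing of the extension operator $g \mapsto u$ away from $\Omega_2$, together with the $G^a_N$-bounds — combined with Young's inequality and the choice $N \sim C(\Omega_1,\Omega_2)(|\log\epsilon| + |\log\delta| + 1)$, yields \eqref{eq:quant_control_1}.

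Next I would prove the variable-coefficient analogue of Lemma~\ref{lem:boundary}, i.e. an estimate of $\|g\|_{L^2(\Omega_2)} = \|\p_{n+1} u\|_{L^2(\Omega_2 \times \{0\})}$ by $\|\p_{n+1} u\|_{L^2(\Omega_2 \times \{\delta\})}$ plus $\delta$-small multiples of $\|g\|_{L^2(\Omega_2)}$ and of the tangential gradient $\|\nabla' g\|_{L^2(\Omega_2)}$. Writing $\p_{n+1}u(\cdot,0)$ via the fundamental theorem in $x_{n+1}$ in terms of its trace at height $\delta$ plus $\int_0^\delta \p_{n+1}^2 u\, dy$, the boundary-layer piece near $\p\Omega_2$ is controlled using $\nabla'\p_{n+1}u(\cdot,0) = \nabla' g$, while on the bulk one replaces $\p_{n+1}^2 u$ by $-L u = -\p_i(a^{ij}\p_j u)$ and splits $u = G^a_N \ast_{x'}(\eta g) + G^a_N \ast_{x'}((1-\eta)g)$ with a cutoff $\eta$ as in Lemma~\ref{lem:boundary}. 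The far piece is estimated through pointwise bounds on the second tangential derivatives of $G^a_N$ off the diagonal; the near piece requires the uniform-in-$y$ mapping bound $\phi \mapsto L(G^a_N \ast_{x'}\phi)(\cdot,y) : H^1 \to L^2$, which in the constant-coefficient setting is precisely the Fourier multiplier $|\xi|$ appearing in the proof of Lemma~\ref{lem:boundary}, and which here follows from the Calder\'on--Zygmund structure of $\nabla'\nabla' G^a_N$. Combining the two ingredients exactly as in Section~\ref{sec:invert} — choosing $\delta$ to be a suitable power of $\|g\|_{L^2(\Omega_2)}/\|g\|_{H^1(\Omega_2)}$ truncated below by an absolute constant, inserting this into \eqref{eq:quant_control_1} with $\epsilon = \delta$, and absorbing the resulting lower-order term — produces \eqref{eq:almost_invert_Riesz_1} with an arbitrary exponent $\tilde\sigma > \sigma$.

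I expect the principal obstacle to be the variable-coefficient version of Lemma~\ref{lem:boundary}, and within it the uniform $H^1 \to L^2$ bound for $\phi \mapsto L(G^a_N \ast_{x'}\phi)(\cdot,y)$ for small $y \geq 0$: the clean Fourier argument used in the constant-coefficient proof is unavailable, so one must upgrade the kernel bounds for $G^a_N$ stated before the proposition to bounds on two tangential derivatives carrying the appropriate Calder\'on--Zygmund cancellation. A secondary but essential point, used silently throughout, is that $v = \p_{n+1} u$ solves the same equation and that even reflection across $(\R^n \setminus \overline{\Omega_2}) \times \{0\}$ is legitimate — both of which rest on the coefficients $a^{ij}$ being independent of $x_{n+1}$, as built into the definition of $L$.
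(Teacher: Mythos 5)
Your proposal is correct and takes essentially the same route as the paper: the proof there likewise consists of observing that the two ingredients --- the propagation of smallness estimate of Proposition \ref{prop:small} (three balls plus the Lebeau--Robbiano interpolation \cite{LR95}, both valid for uniformly elliptic operators with smooth, $x_{n+1}$-independent coefficients) and the boundary-layer estimate of Lemma \ref{lem:boundary} --- survive the passage from $\Delta$ to $L+\p_{n+1}^2$, after which the argument of Section \ref{sec:invert} is repeated verbatim. The only minor difference lies in how the analogue of Lemma \ref{lem:boundary} is justified: you propose redoing the kernel-splitting argument, which needs Calder\'on--Zygmund-type control of second tangential derivatives of $G^a_N$ beyond the first-order bounds recorded in Section \ref{sec:Perturb} (a point you rightly flag as the main technical task), whereas the paper simply appeals to the fundamental theorem of calculus together with non-tangential limit results for divergence-form operators \cite{K96}.
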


\begin{proof}
This follows as in the constant coefficient case by noting that both central ingredients, the propagation of smallness estimate from Proposition \ref{prop:small} and the regularity results from Lemma \ref{lem:boundary} remain valid. Indeed, Proposition \ref{prop:small} only used three balls and the boundary-bulk
interpolation argument from \cite{LR95}, which are both still true for uniformly elliptic equations with sufficiently smooth coefficients. Lemma \ref{lem:boundary} relied on the fundamental theorem and non-tangential limits, which are true in much rougher settings \cite{K96}.
\end{proof}

Similarly, it is also possible to deduce quantitative approximation results:

\begin{prop}
\label{prop:approx_Riesz_a}
Let $L$ be as above.
Let $\epsilon>0$ and let $\Omega_1,\Omega_2 \subset \R^n$ be open, bounded Lipschitz sets with $\overline{\Omega}_1\cap \overline{\Omega}_2 = \emptyset$.
Denote by $R^{a, \Omega_2}_j:L^2(\R^n) \rightarrow L^2(\R^n)$ the $j$th truncated Riesz transform with respect to $\Omega_1$ and let $A_j^{a}$ denote its $L^2$ adjoint.
Further assume that $h\in L^2(\Omega_2)$.
Then there exist a universal constant $\sigma>0$ and functions $f_1,\dots, f_n\in L^2(\Omega_1)$ such that
\begin{align}
\label{eq:approx_Riesz_1}
\|h- \sum\limits_{j=1}^{n} A_j^a f_j\|_{L^2(\Omega_2)} \leq \epsilon  \mbox{ and }
\|f\|_{L^2(\Omega_1)} \leq e^{C(1+\|h\|_{H^1(\Omega_2)}^{\sigma}/\epsilon^{\sigma})} \|h\|_{L^2(\Omega_2)},
\end{align}
where $f=(f_1,\dots,f_n)$ and where $C = C(\Omega_1,\Omega_2)>1$ is a constant, which only depends on the geometries of $\Omega_1, \Omega_2$ and their relative locations in $\R^n$.
\end{prop}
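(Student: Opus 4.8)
The plan is to run precisely the same variational machinery that produced Proposition \ref{prop:HT2} and Proposition \ref{prop:approx_Riesz}, but now over the generalized operators $R_j^{a,\Omega_2,\Omega_1}$ and their adjoints $A_j^a$. First I would observe that the qualitative unique continuation property needed for coercivity is available: since $u := G_N^a \ast_{x'} g$ solves the uniformly elliptic equation $Lu + \p_{n+1}^2 u = 0$ in $\R^{n+1}_+$, the vanishing of all the truncated Riesz transforms $R_j^{a,\Omega_2,\Omega_1} g = \p_j u(\cdot,0)$ on $\Omega_1$ forces $u$ to be constant on $\Omega_1 \times \{0\}$ and hence, together with $\p_{n+1} u = 0$ there, weak boundary unique continuation for $L + \p_{n+1}^2$ (valid for smooth coefficients) yields $g = 0$ in $L^2(\Omega_2)$ — the exact analogue of Lemma \ref{lem:prop_Riesz} (c). Then I would introduce, for fixed $h \in L^2(\Omega_2)$ and $\epsilon > 0$, the functional
\begin{align*}
\Je(g) := \frac{1}{2}\|R^{a,\Omega_2} g\|_{L^2(\Omega_1)}^2 + \epsilon \|g\|_{L^2(\Omega_2)} - (h,g)_{L^2(\Omega_2)},
\end{align*}
which is convex and weakly lower semicontinuous; coercivity follows from the unique continuation statement by the same two-case dichotomy as in Lemma \ref{lem:ex}. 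Setting $f_j := -R_j^{a,\Omega_2,\Omega_1} \bar g$ for the minimizer $\bar g$ and computing Euler--Lagrange equations, the adjoint relation $(A_j^a(\chi_{\Omega_2} f), g)_{L^2(\Omega_1)} = (f, R_j^{a,\Omega_1,\Omega_2} g)_{L^2(\Omega_2)}$ converts the variational inequality into $\|h - \sum_j A_j^a f_j\|_{L^2(\Omega_2)} \leq \epsilon$, the first bound in \eqref{eq:approx_Riesz_1}.

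For the cost estimate I would repeat the perturbation argument of Proposition \ref{prop:HT2}: rewrite $\Je(g) = \Jed(g) + \frac{\epsilon}{2}\|g\|_{L^2(\Omega_2)} - (h, \p_{n+1} N(g)(\cdot,0) - \p_{n+1} N(g)(\cdot,\delta))_{L^2(\Omega_2)}$ with
\begin{align*}
\Jed(g) := \frac{1}{2}\|R^{a,\Omega_2} g\|_{L^2(\Omega_1)}^2 - (h, \p_{n+1} N(g)(\cdot,\delta))_{L^2(\Omega_2)} + \frac{\epsilon}{2}\|g\|_{L^2(\Omega_2)},
\end{align*}
choose $\delta \sim \epsilon / \|h\|_{H^1(\Omega_2)}$ so that the leftover term is nonnegative (using that $\|\p_{n+1}(N(g)(\cdot,0) - N(g)(\cdot,\delta))\|_{\dot H^{-1}(\Omega_2)} \lesssim \delta \|g\|_{L^2(\Omega_2)}$ via $\p_{n+1}^2 u = -L u$ and the $\dot H^{-1} \to L^2$ mapping properties of $L$), and then bound $\inf_g \Jed(g)$ from below by inserting the propagation of smallness estimate for $\p_{n+1} u$ on $\Omega_2 \times \{\delta\}$ — which is the exact analogue of Proposition \ref{prop:small}, valid here because it rests only on three-balls inequalities and the Lebeau--Robbiano interpolation estimate, both available for uniformly elliptic equations with smooth coefficients (c.f. the proof of Proposition \ref{prop:quant_unique2_Riesz_a}). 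Young's inequality then yields $\|f\|_{L^2(\Omega_1)}^2 \leq \exp(C(1 + \|h\|_{H^1(\Omega_2)}^\sigma/\epsilon^\sigma)) \|h\|_{L^2(\Omega_2)}^2$ as in \eqref{eq:cost}, with $C = C(\Omega_1,\Omega_2)$ and $\sigma$ coming from the three-balls exponent. As in the earlier proofs one first reduces to $h \in H^1_0(\Omega_2)$ by extension.

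The main obstacle is not in the variational scheme — that is essentially formal once the two ingredients are in place — but in making sure the two PDE inputs genuinely transfer to the variable-coefficient setting. Concretely: (i) the propagation of smallness estimate \eqref{eq:quant_control_1} must hold with $u = G_N^a \ast_{x'} g$, which requires that the even-reflection trick used in Proposition \ref{prop:small} (to get interior three-balls estimates near $(\R^n \setminus \Omega_2) \times \{0\}$) still works; this is fine since $\p_{n+1} u = 0$ there and the coefficients $a^{ij}$ depend only on $x'$, so the reflected function solves the same equation; and (ii) the boundary regularity step analogous to Lemma \ref{lem:boundary}, relating $\|\p_{n+1} u\|_{L^2(\Omega_2 \times \{\delta\})}$ to behaviour at $\delta = 0$, which now must use the kernel bounds for $G_N^a$ established earlier in this section (the tangential derivative estimates $|\p_x^\alpha K_1(x',y')| \leq C(\alpha)$ and their rescalings) in place of the explicit Fourier computation, together with non-tangential limit results valid in rough settings \cite{K96}. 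I would flag these two points explicitly and otherwise state that the argument is identical to that of Propositions \ref{prop:HT2} and \ref{prop:approx_Riesz}, omitting the repeated details.
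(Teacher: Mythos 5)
Your proposal is correct and follows essentially the same route as the paper: the paper's proof is literally the one-line remark that one reruns the variational scheme of Propositions \ref{prop:HT2} and \ref{prop:approx_Riesz} with the functional $\mathcal{J}_{a,\epsilon}(g)=\frac12\|R^{a,\Omega_2}g\|_{L^2(\Omega_1)}^2+\epsilon\|g\|_{L^2(\Omega_2)}-(h,g)_{L^2(\Omega_2)}$, relying (as in Proposition \ref{prop:quant_unique2_Riesz_a}) on the fact that the propagation of smallness and the boundary-layer estimate transfer to smooth uniformly elliptic coefficients. Your writeup is a faithful expansion of exactly that argument, and the two transfer points you flag (even reflection with $x'$-dependent coefficients, kernel/non-tangential-limit bounds replacing the Fourier computation) are precisely the ingredients the paper invokes.
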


Although we cannot rely on an explicit expression for $A_j^{a}$, the good stability properties of its adjoint yield sufficient information to infer this quantitative approximation result. On a technical level this is reflected in the fact that the proof of Proposition \ref{prop:approx_Riesz_a} only indirectly uses $A_j^{\Omega_1}$ and mainly exploits properties of $R_j^{a,\Omega_2}$. 

\begin{proof}
There are nearly no changes with respect to the constant coefficient setting, if one replaces the functional $\mathcal{J}_{\epsilon}$ by 
\begin{align*}
\mathcal{J}_{a,\epsilon}(g):= \frac{1}{2}\|R^{a,\Omega_2}g\|_{L^2(\Omega_1)}^2 + \epsilon \|g\|_{L^2(\Omega_1)} -(h,g)_{L^2(\Omega_2)}.
\end{align*}
\end{proof}

\begin{rmk}[Generalizations]
\label{rmk:lower_order}
As a further generalization of the situation discussed above, it is possible to consider operators with rougher coefficients, e.g. $C^{1,1}$ regularity for $a^{ij}$ would have sufficed. Moreover, we can also include first order terms in the operator $L$. However, zeroth order terms destroy even the qualitative unique continuation properties, e.g. the analogue of Lemma \ref{lem:Riesz_trafo} (c) (as for instance it is no longer possible to subtract constants without modifying the equations). 
\end{rmk}

\bibliographystyle{alpha}
\bibliography{citationsHT}

\begin{thebibliography}{DNCK06}

\bibitem[AAK14]{AK14}
Reema Al-Aifari and Alexander Katsevich.
\newblock Spectral analysis of the truncated {H}ilbert transform with overlap.
\newblock {\em SIAM Journal on Mathematical Analysis}, 46(1):192--213, 2014.

\bibitem[ADK15]{ADK15}
Rima Alaifari, Michel Defrise, and Alexander Katsevich.
\newblock Asymptotic analysis of the {S}{V}{D} for the truncated {H}ilbert
  transform with overlap.
\newblock {\em SIAM Journal on Mathematical Analysis}, 47(1):797--824, 2015.

\bibitem[ADK16]{ADK16}
Rima Alaifari, Michel Defrise, and Alexander Katsevich.
\newblock Stability estimates for the regularized inversion of the truncated
  {H}ilbert transform.
\newblock {\em Inverse Problems}, 32(6):065005, 2016.

\bibitem[AEK95]{AEK95}
Vilhelm Adolfsson, Luis Escauriaza, and Carlos Kenig.
\newblock Convex domains and unique continuation at the boundary.
\newblock {\em Revista Matem{\'a}tica Iberoamericana}, 11(3), 1995.

\bibitem[APS96]{APS96}
Kari Astala, Lassi P{\"a}iv{\"a}rinta, and Eero Saksman.
\newblock The finite {H}ilbert transform in weighted spaces.
\newblock {\em Proceedings of the Royal Society of Edinburgh Section A:
  Mathematics}, 126(6):1157--1167, 1996.

\bibitem[APS13]{APS13}
Rima Alaifari, Lillian~B Pierce, and Stefan Steinerberger.
\newblock Lower bounds for the truncated {H}ilbert transform.
\newblock {\em arXiv preprint arXiv:1311.6845}, 2013.

\bibitem[ARRV09]{ARV09}
Giovanni Alessandrini, Luca Rondi, Edi Rosset, and Sergio Vessella.
\newblock The stability for the {C}auchy problem for elliptic equations.
\newblock {\em Inverse problems}, 25(12):123004, 2009.

\bibitem[Ber89]{B89}
Mario Bertero.
\newblock Linear inverse and {I}ll-posed problems.
\newblock {\em Advances in electronics and electron physics}, 75:1--120, 1989.

\bibitem[Bro62a]{B62}
Felix~E Browder.
\newblock Approximation by solutions of partial differential equations.
\newblock {\em American Journal of Mathematics}, 84(1):134--160, 1962.

\bibitem[Bro62b]{B62a}
Felix~E Browder.
\newblock Functional analysis and partial differential equations. {II}.
\newblock {\em Mathematische Annalen}, 145(2):81--226, 1962.

\bibitem[CS07]{CS07}
Luis Caffarelli and Luis Silvestre.
\newblock An extension problem related to the fractional {L}aplacian.
\newblock {\em Communications in partial differential equations},
  32(8):1245--1260, 2007.

\bibitem[CS16]{CS16}
Luis~A Caffarelli and Pablo~Ra{\'u}l Stinga.
\newblock Fractional elliptic equations, {C}accioppoli estimates and
  regularity.
\newblock In {\em Annales de l'Institut Henri Poincare (C) Non Linear
  Analysis}, volume~33, pages 767--807. Elsevier, 2016.

\bibitem[DM95]{DM95}
Georg Dolzmann and Stefan M{\"u}ller.
\newblock Estimates for {G}reen's matrices of elliptic systems by {$L^p$}
  theory.
\newblock {\em manuscripta mathematica}, 88(1):261--273, 1995.

\bibitem[DNCK06]{DNCK06}
Michel Defrise, Fr{\'e}d{\'e}ric Noo, Rolf Clackdoyle, and Hiroyuki Kudo.
\newblock Truncated {H}ilbert transform and image reconstruction from limited
  tomographic data.
\newblock {\em Inverse problems}, 22(3):1037, 2006.

\bibitem[DSV14]{DSV14}
Serena Dipierro, Ovidiu Savin, and Enrico Valdinoci.
\newblock All functions are locally $ s $-harmonic up to a small error.
\newblock {\em arXiv preprint arXiv:1404.3652}, 2014.

\bibitem[DSV16]{DSV16}
Serena Dipierro, Ovidiu Savin, and Enrico Valdinoci.
\newblock Local approximation of arbitrary functions by solutions of nonlocal
  equations.
\newblock {\em arXiv preprint arXiv:1609.04438}, 2016.

\bibitem[EA97]{EA97}
Luis Escauriaza and Vilhelm Adolfsson.
\newblock {$C^{1, \alpha}$} domains and unique continuation at the boundary.
\newblock {\em Comm. Pure Appl. Math., L}, pages 935--969, 1997.

\bibitem[FCZ00]{FZ00}
Enrique Fern{\'a}ndez-Cara and Enrique Zuazua.
\newblock The cost of approximate controllability for heat equations: the
  linear case.
\newblock {\em Advances in Differential equations}, 5(4-6):465--514, 2000.

\bibitem[Gra08]{G08}
Loukas Grafakos.
\newblock {\em Classical {F}ourier analysis}, volume~1.
\newblock Springer, 2008.

\bibitem[GSU16]{GSU16}
Tuhin Ghosh, Mikko Salo, and Gunther Uhlmann.
\newblock The {C}alder{\'o}n problem for the fractional {S}chr{\"o}dinger
  equation.
\newblock {\em arXiv preprint arXiv:1609.09248}, 2016.

\bibitem[Joh60]{J60}
Fritz John.
\newblock Continuous dependence on data for solutions of partial differential
  equations with a prescribed bound.
\newblock {\em Communications on pure and applied mathematics}, 13(4):551--585,
  1960.

\bibitem[Ken96]{K96}
Carlos~E Kenig.
\newblock Harmonic analysis techniques for second order elliptic boundary value
  problems.
\newblock {\em Bull. Amer. Math. Soc}, 33:229--236, 1996.

\bibitem[KN85]{KN85}
Carlos~E Kenig and Wei-Ming Ni.
\newblock On the elliptic equation $ {L}u-k+ k \exp [2u]= 0$.
\newblock {\em Annali della Scuola Normale Superiore di Pisa-Classe di
  Scienze}, 12(2):191--224, 1985.

\bibitem[KN98]{KN98}
Igor Kukavica and Kaj Nystr{\"o}m.
\newblock Unique continuation on the boundary for {D}ini domains.
\newblock {\em Proceedings of the American Mathematical Society},
  126(2):441--446, 1998.

\bibitem[Lax56]{L56}
Peter~D Lax.
\newblock A stability theorem for solutions of abstract differential equations,
  and its application to the study of the local behavior of solutions of
  elliptic equations.
\newblock {\em Communications on Pure and Applied Mathematics}, 9(4):747--766,
  1956.

\bibitem[LP61]{LP61}
Henry~J Landau and Henry~O Pollak.
\newblock Prolate spheroidal wave functions, {F}ourier analysis and
  uncertainty—{I}{I}.
\newblock {\em Bell Labs Technical Journal}, 40(1):65--84, 1961.

\bibitem[LR95]{LR95}
Gilles Lebeau and Luc Robbiano.
\newblock Contr{\^o}le exact de l{'}{\'e}quation de la chaleur.
\newblock {\em Communications in Partial Differential Equations},
  20(1-2):335--356, 1995.

\bibitem[LS16]{LS16}
Roy~R Lederman and Stefan Steinerberger.
\newblock Stability {E}stimates for {T}runcated {F}ourier and {L}aplace
  {T}ransforms.
\newblock {\em arXiv preprint arXiv:1605.03866}, 2016.

\bibitem[McL00]{McLean}
William McLean.
\newblock {\em Strongly elliptic systems and boundary integral equations}.
\newblock Cambridge University Press, Cambridge, 2000.

\bibitem[R{\"u}l15]{Rue15}
Angkana R{\"u}land.
\newblock Unique continuation for fractional {S}chr{\"o}dinger equations with
  rough potentials.
\newblock {\em Communications in Partial Differential Equations},
  40(1):77--114, 2015.

\bibitem[ST10]{ST10}
Pablo~Ra{\'u}l Stinga and Jos{\'e}~Luis Torrea.
\newblock Extension problem and {H}arnack's inequality for some fractional
  operators.
\newblock {\em Communications in Partial Differential Equations},
  35(11):2092--2122, 2010.

\bibitem[Tik43]{T43}
Andrey~Nikolayevich Tikhonov.
\newblock On the stability of inverse problems.
\newblock In {\em Dokl. Akad. Nauk SSSR}, volume~39, pages 195--198, 1943.

\bibitem[Tri51]{T51}
Francesco~G Tricomi.
\newblock On the finite {H}ilbert transformation.
\newblock {\em Quarterly Journal of Mathematics}, 2(1):199--199, 1951.

\end{thebibliography}

\end{document}